\newtheorem{theorem}{Theorem}[section]
\newtheorem{lemma}[theorem]{Lemma}
\theoremstyle{definition}
\newtheorem{definition}[theorem]{Definition}
\theoremstyle{remark}
\newtheorem{remark}[theorem]{Remark}
\numberwithin{equation}{section}
\begin{document}

\title{Stickiness of KAM tori for higher dimensional beam equation}

\author{Xiucui Song*}
\address{*School of Mathematical Sciences, Fudan University. Rm 1818, Guanghua East Tower, 220 Handan Road, Shanghai 200433 China}
\email{xiucuisong12@fudan.edu.cn}
\thanks{The second author was supported in part by NNSFC Grant \#11101059.}

\author{Hongzi Cong**}
\address{**School of Mathematical Sciences, Dalian University of Technology, Dalian, Liaoning 116024, China}
\email{conghongzi@dlut.edu.cn(corresponding author)}

\subjclass[2000]{Primary 37K55, 37J40; Secondary 35B35, 35Q35}



\keywords{Stickiness, KAM tori, Beam equation, tame property, normal form}

\begin{abstract}
This paper is concerned with the stickiness of invariant tori obtained
by KAM technics (so-called KAM tori) for higher dimensional beam equation. We prove that the KAM tori are sticky, i.e. the solutions starting in the $\delta$-neighborhood of KAM torus still stay close to the KAM torus for a polynomial long time such as $|t|\leq \delta^{-\mathcal{M}}$ with any $\mathcal{M}\geq 0$, by constructing a partial normal form of higher order, which satisfies $p$-tame property, around the KAM torus.
\end{abstract}

\maketitle



\section{Introduction and main results}
\subsection{Introduction and main results}
Since 1990's, KAM theory and Nekhoroshev theorem have a great development for infinite-dimensional Hamiltonian systems. See \cite{BB1},\cite{Bour1}-\cite{B3},\cite{Bour5},\cite{Bour6},\cite{C-W},\cite{E-K}-\cite{LY2},\cite{P1},\cite{W} and \cite{Bam1}-\cite{BN},\cite{BFG},\cite{B6},\cite{B2004},\cite{DS} for example.
KAM theory concerns the preservation and linear stability
of a majority of the non-resonant invariant tori (so-called KAM tori), and Nekhoroshev theorem concerns exponential lower bounds for the stability time (so-called effective stability).
Note that the trajectories lying in KAM tori clearly have an infinite stability time (so-called perpetual stability).
Therefore one can
also expect that, for a trajectory starting near a KAM torus, the stability
time is much larger than the one predicted by Nekhoroshev theorem (so-called stickiness). For finite dimensional Hamiltonian systems,
results concerning this `stickiness' of KAM tori have been obtained in  \cite{MG}-\cite{PW}. It is a natural question of the effective stability of the tori for infinite dimensional Hamiltonian systems. Recently, such a result about the long time stability for nonlinear Schr$\ddot{\mbox{o}}$dinger equation and nonlinear wave equation
has been given in \cite{CLY} and \cite{CGL}. The basic idea is that due to the suitable $p$-tame property, which generalized the key idea in \cite{BG}, and constructing a partial normal form of higher order, and then one can show that the solution, which starts in the $\delta$-neighbourhood of a KAM torus, still stays in the $\delta$-neighbourhood of the KAM torus in a polynomial long time.

In this paper, we consider $d$-dimensional ($d\geq 1$) beam equation
\begin{equation}\label{26}
u_{tt}+(-\triangle+M_{\xi})^2u+\varepsilon f(u)=0,\qquad x\in\mathbb{T}^d,
\end{equation}
where
$M_{\xi}$ is a real
Fourier multiplier defined by
\begin{equation*}\label{091913}
M_{\xi} \phi_{j}(x)=\xi_{j}\phi_{j}(x),
\end{equation*}
with $\xi=(\xi_{\textbf{j}})_{\textbf{j}\in\mathbb{Z}^d}\in\Pi\subset\mathbb{R}^{\mathbb{Z}^d}$ and
\begin{equation*}
\phi_{\textbf{j}}(x)=\frac{1}{(2\pi)^{d/2}}e^{\sqrt{-1}\langle \textbf{j},x \rangle},
\end{equation*}
and
$f(u)$ is a real-analytic function near $u = 0$ with $f (0) = f'(0) = 0.$ For most of $\xi\in\Pi$ and sufficiently small $\varepsilon$, the existence of KAM tori of equation (\ref{26}) was given in \cite{GY2003} ($d=1$ and $M_{\xi}$ is replaced by a fixed constant potential $m$), \cite{GY2006} ($d\geq 1$), \cite{GY20061} ($d\geq 1$, $M_{\xi}$ is replaced by a constant potential $m$ and $m$ is considered as a parameter) and \cite{XG} ($d\geq 1$, $M_{\xi}$ is replaced by a fixed constant potential $m$) respectively. However, there is nothing known about the long time stability about the KAM tori for equation (\ref{26}). In the present paper, we will prove
that "most" of KAM tori for equation (\ref{26}) are sticky.
More precisely, we have the following theorem:
\begin{theorem}\label{T4}Consider the higher dimensional beam equation
\begin{equation*}\label{26}
u_{tt}+(-\triangle+M_{\xi})^2u+\varepsilon f(u)=0,\qquad x\in\mathbb{T}^d.
\end{equation*}
There exists a large subset $\tilde\Pi\subset \Pi$, such that for each $\xi\in\tilde\Pi$ the KAM torus $\mathcal{T}_{\xi}$ of equation
(\ref{26}) is stable in long time. Precisely, for arbitrarily given $\mathcal M$ with
$0\leq \mathcal{M}\leq C(\varepsilon)$ (where $C(\varepsilon)$ is a constant depending on $\varepsilon$ and $C(\varepsilon)\rightarrow\infty$ as $\varepsilon\rightarrow0$) and $p\geq 8(\mathcal{M}+7)^{4}+1$, there is a
small positive $\delta_0$ depending on $n,p$ and
$\mathcal{M}$, such that for any
$0<\delta<\delta_0$ and any solution $u(t,x)$ of equation (\ref{26})
with the initial datum satisfying
$${d}_{H^p(\mathbb{T}^d)}(u(0,x),\mathcal{T}_{\xi})
:=\inf_{w\in\mathcal{T}_\xi}||u(0,x)-w||_{H^p(\mathbb{T}^d)}\leq \delta,$$
then
\begin{equation*}{d}_{H^p(\mathbb{T}^d)}(u(t,x),\mathcal{T}_{\xi})
:=\inf_{w\in\mathcal{T}_\xi}||u(t,x)-w||_{H^p(\mathbb{T}^d)}\leq
2\delta,\qquad \mbox{for all} \ |t|\leq
{\delta}^{-\mathcal{M}}.
\end{equation*}
\end{theorem}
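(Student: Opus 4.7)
The plan is to follow the strategy developed for NLS and NLW in \cite{CLY} and \cite{CGL}, adapting it to the Hamiltonian structure and higher dimensional frequency asymptotics of the beam equation. First, writing $v=u_t$ and expanding $u(x)=\sum_{j\in\mathbb{Z}^d}q_j\phi_j(x)$, equation (\ref{26}) becomes an infinite dimensional Hamiltonian system whose linear frequencies are $\lambda_j=|j|^2+\xi_j$ (after the standard complex canonical change that diagonalizes the quadratic part). Invoking the KAM theorem of \cite{GY2006} and \cite{XG}, for $\xi$ in a Cantor-type subset $\tilde\Pi\subset\Pi$ there is a choice of finite tangential set $\mathcal{J}\subset\mathbb{Z}^d$ and a symplectic transformation $\Psi$ such that
\begin{equation*}
H\circ\Psi \;=\; \langle \omega(\xi),I\rangle \;+\; \sum_{j\notin\mathcal{J}}\Omega_j(\xi)\,z_j\bar z_j \;+\; P(\theta,I,z,\bar z),
\end{equation*}
where $P$ vanishes on the KAM torus $\mathcal{T}_\xi=\{I=I^{*},\,z=\bar z=0\}$. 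Setting $J=I-I^{*}$ localises the problem to a small neighbourhood of the origin in the $(\theta,J,z,\bar z)$-coordinates, with a real-analytic perturbation satisfying the $p$-tame estimates inherited from the KAM step.

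The core step is the construction of a partial Birkhoff normal form of order $\mathcal{M}+7$ that preserves the $p$-tame property in the sense of \cite{BG}. I would iteratively solve homological equations $\{N,F_k\}+R_k^{\mathrm{low}}=Z_k$, where $Z_k$ collects the resonant monomials (those preserving $|J|$ together with the weighted normal-mode quantity $\sum_{j\notin\mathcal{J}}\langle j\rangle^{2p}z_j\bar z_j$). Solvability at each step requires second Melnikov conditions on the perturbed frequencies $(\omega(\xi),\Omega(\xi))$, which are of the type already secured inside the KAM construction, up to a further small measure excision of $\Pi$. After $\mathcal{M}+7$ iterations one obtains $H=N+Z+R$ with $Z$ integrable with respect to the distance functional and $R$ vanishing to order $\mathcal{M}+7$ at $\mathcal{T}_\xi$, both inheriting $p$-tame bounds.

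With this normal form in hand, use $\Phi(t):=|J(t)|+\sum_{j\notin\mathcal{J}}\langle j\rangle^{2p}|z_j(t)|^2$, which is equivalent to ${d}_{H^p}(u(t,\cdot),\mathcal{T}_\xi)^2$ through $\Psi$ and the Birkhoff change. The flows of $N$ and $Z$ preserve $\Phi$ exactly, while the $p$-tame estimate on $R$ yields a differential inequality roughly of the form $|\dot\Phi(t)|\leq C\,\Phi(t)^{(\mathcal{M}+7)/2}$ as long as $\Phi\leq(2\delta)^2$. A standard bootstrap then gives $\Phi(t)\leq 4\delta^2$ for all $|t|\leq\delta^{-\mathcal{M}}$, which is the statement of Theorem \ref{T4} after transporting back through $\Psi$.

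The main obstacle I anticipate is establishing and propagating the $p$-tame property in dimension $d\geq 2$. Because the beam frequencies $\lambda_j=|j|^2+\xi_j$ admit arbitrarily large multiplicities on $\mathbb{Z}^d$, the solutions of the homological equations involve many near-colliding terms and must still satisfy vector field bounds with no loss of Sobolev regularity; any such loss, iterated $\mathcal{M}+7$ times through the Birkhoff steps, would be fatal. Handling this requires sharp separation properties of the lattice frequencies and careful bookkeeping of tame norms under iterated Poisson brackets, and is precisely where the quantitative hypothesis $p\geq 8(\mathcal{M}+7)^{4}+1$ enters the argument.
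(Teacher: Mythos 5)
Your overall skeleton (Hamiltonian formulation, KAM normal form, partial Birkhoff normal form preserving a $p$-tame property, then a bootstrap on a distance functional) matches the paper's, which proceeds by verifying the hypotheses of Theorem \ref{T1}, then applying Theorem \ref{thm7.1} and Theorem \ref{T3}. But there is a genuine gap in your normal form step. You propose to normalize \emph{all} monomials up to order $\mathcal{M}+7$ and assert that solvability of the homological equations only requires ``second Melnikov conditions \dots of the type already secured inside the KAM construction.'' That is not the case: normalizing terms of order up to $\mathcal{M}+2$ requires lower bounds on divisors $\langle k,\breve\omega\rangle+\langle l,\breve\Omega\rangle$ with $|l|$ as large as $\mathcal{M}+2$, i.e.\ genuinely higher-order Melnikov conditions involving combinations of many normal frequencies. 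In infinite dimension these cannot be imposed for arbitrary $l$; the paper's way out is to fix a cutoff $\mathcal{N}$, split the normal modes into low ($|\textbf{j}|_2\leq\mathcal{N}$) and high ($|\textbf{j}|_2>\mathcal{N}$) frequencies, and only normalize monomials carrying at most two high normal variables ($|\hat l|\leq 2$), leaving a remainder $Q$ that is at least cubic in the high modes and is controlled by the gain $\mathcal{N}^{-(p-d)}$ rather than by vanishing order at the torus. The measure estimate for the resulting non-resonance conditions (the set $\mathcal{R}$ in (\ref{0912081}), treated case by case over $\mathcal{L}_0,\mathcal{L}_1,\mathcal{L}_{2\pm}$ following Bambusi--Berti--Magistrelli) is the part the paper proves in detail, and it is absent from your plan. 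Without the $\mathcal{N}$-truncation your iteration has no solvable homological equation, and without an argument for the unnormalized high-mode remainder your differential inequality $|\dot\Phi|\leq C\Phi^{(\mathcal{M}+7)/2}$ does not follow.

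A second, related omission: you identify the multiplicity of the eigenvalues of $-\triangle$ on $\mathbb{T}^d$ as the main obstacle and gesture at ``sharp separation properties of the lattice frequencies,'' but the mechanisms the paper actually uses are (i) the momentum conservation built into the nonlinearity (the constraint (\ref{04051}) on the monomials, Assumption (3) of Theorem \ref{T1}), (ii) the smoothing property of the vector field, encoded in the $p$-tame norm (\ref{091402}) as boundedness from $\ell^2_{b,p}$ into $\ell^2_{b,p+2}$, and (iii) the fact that each frequency carries its own independent parameter $\xi_{\textbf{j}}$, so the twist estimates (\ref{00003})--(\ref{00006}) are immediate. These need to be stated and verified for the beam nonlinearity (as in (\ref{11112}) and (\ref{00002})) before any of the abstract machinery applies.
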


\subsection{Further discussion}
As the paper \cite{BG} says, the key points to prove the long time stability result are: one is that to define a suitable $p$-tame property ($p$-tame norm) and to prove the $p$-tame property persistence under normal form iterative (some estimates about $p$-tame norm); the other is that some nonresonant conditions should be satisfied.

Following the idea in \cite{CGL} (or \cite{CLY}), it is easy to define the suitable $p$-tame norm and show the $p$-tame property persists under KAM iterative procedure and normal form iterative procedure. However it is not obvious that the nonresonant conditions hold true, since the eigenvalues of Laplacian operator are multiple with $d\geq1$ under periodic boundary conditions. We overcome this difficulty by the observation that there are some symmetry in the nonlinearity (see (\ref{04041}) and (\ref{04051})) and the regularity in the nonlinearity (see the definition of $p$-tame norm (\ref{091402}) where noting $\lfloor JW_z\rceil_{D(s,r)\times\Pi}$ is an operator form $\ell_{b,p}^2$ to $\ell_{b,p+2}^2$), which is actually used in \cite{GY2006} where a normal form of order $2$ is given. To obtain a partial normal form of high order, one has to face a more complicated small divisor problem. After a careful calculation, we prove that the nonresonant conditions are satisfied. Finally, we point out that the method in our paper can not be applied to deal with the problem of the long time stability of KAM tori for $d$-dimensional nonlinear Schr$\ddot{\mbox{o}}$dinger equation due to lack of the regularity in the nonlinear.
  
The paper is organized as follows. In section 2, we give some basic notations and definitions of $p$-tame norm for a Hamiltonian vector field. In section 3, we construct a norm form of order 2, which satisfies $p$-tame property, around the KAM tori based on the standard KAM method (see Theorem \ref{T1}), and a partial normal form of order $\mathcal{M}+2$ in the neighbourhood of the KAM tori (see Theorem \ref{thm7.1}). Since the iterative procedure is parallel to \cite{CLY}, we only prove the measure estimate in detail. Finally, due to the partial normal form of order $\mathcal{M}+2$ and $p$-tame property, we show the KAM tori are stable in a long time (see Theorem\ref{T3}). In section 4, we finish the proof of Theorem \ref{T4}. In section 5, we give the proof of the measure estimate. In section 6, we list some properties of $p$-tame norm. these properties are used in the proof of Theorem \ref{T1} and Theorem \ref{thm7.1} to ensure the $p$-tame property surviving under KAM iterative procedure and normal form iterative procedure.

\section{The definition of $p$-tame norm for a Hamiltonian vector field}
We will define $p$-tame norm for a Hamiltonian vector field as in \cite{CLY} in this section. First we introduce the
functional setting and the main notations concerning infinite dimensional Hamiltonian systems. Given $n\geq1$, let $S=\{\textbf{j}_1,\dots,\textbf{j}_n\}\subset\mathbb{Z}^d$ and $\mathbb{Z}_1^d:=\mathbb{Z}^d\setminus S$. Consider the Hilbert space of complex-valued sequences
\begin{equation*}
\ell^2_p:=\left\{q=(q_{\textbf{j}})_{\textbf{j}\in\mathbb{Z}_1^d}\bigg{|}\parallel q \parallel_p^2:=\sum_{\textbf{j}\in\mathbb{Z}_1^d}|q_\textbf{j}|^2
|\textbf{j}|_2^{2p}<+\infty\right\}
\end{equation*}
with $p>d$ and
\begin{equation*}
|\textbf{j}|_2=\sqrt{|j_1|^2+\cdots+|j_d|^2}, \quad \textbf{j}=(j_1,\dots,j_d)\in\mathbb{Z}_1^d,
\end{equation*}and the symplectic phase space
\begin{equation*}
(x,y,z)\in\mathbb{T}_s^n\times\mathbb{C}^n\times\ell_{b,p}^2:=\mathcal{P}^p,\qquad z:=(q,\bar q)\in\ell_{b,p}^2:=\ell_p^2\times\ell^{2}_p,
\end{equation*}
where $\mathbb{T}_s^n$ is the complex open $s$-neighbourhood of the $n$-torus $\mathbb{T}^n:=\mathbb{R}^n/(2\pi\mathbb{Z})^n$, equipped with the canonic symplectic structure:
\begin{equation*}
\sum_{i=1}^{n}dy_i\wedge dx_i+\sqrt{-1}\sum_{\textbf{j}\in\mathbb{Z}_1^d}dq_{\textbf{j}}\wedge d\bar{q}_{\textbf{j}}.
\end{equation*}
Let
\begin{equation*}
D(s,r_1,r_2)=\left\{(x,y,z)\in \mathcal{P}^p \big{|}\;\parallel\mbox{Im}\ x\parallel< s,\parallel y\parallel < r_1^2,\parallel z\parallel_p< r_2\right\},
\end{equation*}
where $\parallel\cdot\parallel$ denote the sup-norm for
complex vectors and
\begin{equation*}
\parallel z\parallel_{p}=\parallel q\parallel_{p}+\parallel\bar q\parallel_p,\qquad\mbox{with}\ z=(q,\bar q).
\end{equation*}
 Any analytic function $W:D(s,r_1,r_2)\rightarrow\mathbb{C}$ can be developed in a totally convergent power series:
\begin{equation*}
W(x,y,z)=\sum_{\alpha\in\mathbb{N}^n,\beta\in\mathbb{N}^{{\mathbb{Z}}_1^d}}
W^{\alpha\beta}(x)y^{\alpha}z^{\beta}.
\end{equation*}Note that there is a multilinear, symmetric, bounded map
\begin{equation*}
\widetilde{W^{\alpha\beta}(x)}\in\mathcal{L}\left(\overbrace{\mathbb{C}^n
\times\dots\times\mathbb{C}^n}^{|\alpha|-times}\times\overbrace{
\ell_{b,p}^2\times\dots\times\ell^{2}_{b,p}}^{|\beta|-times},\mathbb{C}\right),
\end{equation*}
such that
$$
\widetilde {W^{\alpha\beta}(x)}(\overbrace{y,\dots,y}^{|\alpha|-times},
\overbrace{z.\dots,z}^{|\beta|-times})=W^{\alpha\beta}(x)y^{\alpha}z^{\beta},$$
where$$|\alpha|=\sum_{i=1}^{n}|\alpha_i|,$$
and
$$|\beta|=\sum_{{\textbf{j}}\in{{\mathbb{Z}}_1^d}}|\beta_{\textbf{j}}|,$$ and $|\cdot|$ denotes the 1-norm here and below.

We will study the Hamiltonian system
\begin{equation*}
(\dot{x},\dot{y},\dot{z})=X_W(x,y,z),
\end{equation*}
where $X_W$ is the Hamiltonian vector field of $W$,
\begin{equation*}
X_W=(W_y,-W_x,\sqrt{-1}JW_z),
\end{equation*}
and
\begin{equation*}
J:=\left(\begin{array}{cc}
0&I\\-I&0
\end{array}\right).
\end{equation*}

\begin{definition}\label{112901}
Let $D(s)=\{ x \in \mathbb{T}_s^n | \parallel \mbox{Im} x\parallel < s\}$. Consider a function $W(x;\xi):D(s)\times\Pi\rightarrow \mathbb{C}$
is analytic in the variable $x\in D(s)$ and $C^1$-smooth in the parameter $\xi\in\Pi$ in the
Whitney's sense\footnote{In the whole of this paper, the derivatives
with respect to the parameter $\xi\in\Pi$ are understood in the
sense of Whitney.}, and the Fourier
series of $W(x;\xi)$ is given by
$$W(x;\xi)=\sum_{k\in \mathbb Z^n}\widehat
W(k;\xi)e^{\sqrt{-1}\langle k,x\rangle},$$where $$\widehat
W(k;\xi):=\frac1{(2\pi)^n}\int_{\mathbb{T}^n}W(x;\xi)e^{-\sqrt{-1}\langle
k,x\rangle}dx$$ is the $k$-th Fourier coefficient of
$W(x;\xi)$, and $\langle\cdot,\cdot\rangle$ denotes the usual inner
product, i.e.
\begin{equation*}
\langle k,x\rangle=\sum_{i=1}^nk_ix_i.
\end{equation*}
Then define the norm $\parallel\cdot\parallel_{D(s)\times\Pi}$ of $W(x;\xi)$ by
\begin{equation}\label{092704}
\parallel W\parallel_{D(s)\times\Pi}=\sup_{\xi\in\Pi,\textbf{j}\in\mathbb{Z}^d}{\sum_{k\in\mathbb{Z}^n}
\left(|\widehat{W}(k;\xi)|+|\partial_{\xi_{\textbf{j}}}
\widehat{W}(k;\xi)|\right)e^{|k|s}}.
\end{equation}
\end{definition}
\begin{definition}\label{112902}Let
\begin{equation*}
D(s,r)=\{(x,y)\in\mathbb{T}_s^n\times\mathbb{C}^n|\ \parallel\mbox{Im}\
x\parallel < s,\ \parallel y\parallel < r^2\}.
\end{equation*}Consider a function $W(x,y;\xi):D(s,r)\times\Pi\rightarrow
\mathbb{C}$ is analytic in the variable $(x,y)\in D(s,r)$ and
$C^1$-smooth in the parameter $\xi\in\Pi$ with the following form
\begin{equation*}
W(x,y;\xi)=\sum_{\alpha\in\mathbb{N}^n}W^{\alpha}(x;\xi)y^{\alpha}.
\end{equation*}Then
define the norm $\parallel\cdot\parallel_{D(s,r)\times\Pi}$ of $W(x,y;\xi)$
by \begin{equation}\label{092703}
\parallel W\parallel_{D(s,r)\times\Pi}=\sum_{\alpha\in\mathbb{N}^n}
|||\widetilde{\mathcal{W}^{\alpha}}|||\;r^{2|\alpha|},
\end{equation}
where $\mathcal{W}^{\alpha}=\parallel W^{\alpha}(x;\xi)\parallel_{D(s)\times\Pi}$,
 $\widetilde{{\mathcal W}^{\alpha}}\in\mathcal{L}(\overbrace{\mathbb{C}^n\times\dots\times\mathbb{C}^n}^{|\alpha|-times},\mathbb{C})$$ $ is an $|\alpha|$-linear symmetric bounded map such that $$
\widetilde {\mathcal W^{\alpha}}(\overbrace{y,\dots,y}^{|\alpha|-times})=\mathcal{W}^{\alpha}y^{\alpha},$$and $|||\cdot|||$ is the operator norm of multilinear symmetric bounded maps.
\end{definition}

 \begin{definition}\label{021002}
Consider a function
$W(x,y,z;\xi):D(s,r,r)\times\Pi\rightarrow\mathbb{C}$ is
analytic in the variable $(x,y,z)\in D(s,r,r)$ and
$C^1$-smooth in the parameter $\xi\in\Pi$ with the following form
$$W(x,y,z;\xi)=\sum_{\beta\in\mathbb{N}^{{\mathbb{Z}_1^d}}}
W^{\beta}(x,y;\xi)z^{\beta}.$$
Define the modulus $\lfloor W\rceil_{D(s,r)\times\Pi}(z)$ of
$W(x,y,z;\xi)$ by
\begin{equation}\label{081602}\lfloor W\rceil_{D(s,r)\times\Pi}(z):=
\sum_{\beta\in\mathbb{N}^{{\mathbb{Z}_1^d}}}\parallel W^{\beta}\parallel_{D(s,r)\times\Pi}\;
z^{\beta}.\end{equation}
\end{definition}

For $h\geq 1$, let
\begin{equation}\label{091401}\parallel (z^{h})\parallel_{p,d}:=
\frac{1}{h}\sum_{i=1}^{h}\parallel z^{(1)}\parallel_d\cdots\parallel z^{(i-1)}\parallel_d\parallel
 z^{(i)}\parallel_p\parallel z^{(i+1)}\parallel_d\cdots\parallel z^{(h)}\parallel_d.
\end{equation}
\begin{remark}
For $h=1$, it is easy to see that
\begin{equation}
\parallel(z^{h})\parallel_{p,d}=\parallel z\parallel_p.
\end{equation}
\end{remark}
\begin{definition}\label{071803}($p$-tame norm for a homogeneous Hamiltonian) \\ Let $$W(x,y,z;\xi):=W_h(x,y,z;\xi)=\sum_{\beta\in\mathbb{N}^{{\mathbb{Z}_1^d}},|\beta|=h}
W_h^{\beta}(x,y;\xi)z^{\beta}$$ be a function
is analytic in the variable $(x,y,z)\in D(s,r,r)$ and
$C^1$-smooth in the parameter $\xi\in\Pi$.
Define the $p$-tame operator norm for $W_z$ by
\begin{eqnarray}
&&\nonumber|||{W_z}|||_{p,D(s,r)\times\Pi}^{T}\\&: =&\sup_{0\neq z^{(i)}\in \ell
^2_{b,p},1\leq i\leq h-1}
\frac{\parallel{\lfloor{\widetilde{JW_z}}\rceil}_{D(s,r)\times\Pi}(z^{(1)},\dots,z^{(h-1)})\parallel_
{p+2}} {\parallel(z^{h-1})\parallel_{p,d}}\label{091402}, \quad h\geq 2,
\end{eqnarray}
and
\begin{equation}
|||{W_z}|||_{p,D(s,r)\times\Pi}^{T}: =\sup_{0\neq z\in \ell
^2_{b,p}}
{\parallel{\lfloor{\widetilde{JW_z}}\rceil}_{D(s,r)\times\Pi}(z)\parallel_
{p+2}}\label{091402'}, \quad h=0,1,
\end{equation}
define the $d$-operator norm for $W_z$ by
\begin{eqnarray}
&&\nonumber|||{W_z}|||_{d,D(s,r)\times\Pi}\\&: =&\sup_{0\neq z^{(j)}\in \ell
^2_{b,d},1\leq i\leq h-1}
\frac{\parallel{\lfloor{\widetilde{JW_z}}\rceil}_{D(s,r)\times\Pi}(z^{(1)},\dots,z^{(h-1)})\parallel_
{d}} {\parallel(z^{h-1})\parallel_{d,d}},\label{122401}\quad h\geq 2,\end{eqnarray}
and
\begin{equation}
|||{W_z}|||_{d,D(s,r)\times\Pi}: =\sup_{0\neq z\in \ell
^2_{b,d}}
{\parallel{\lfloor{\widetilde{JW_z}}\rceil}_{D(s,r)\times\Pi}(z)\parallel_
{d}}\label{122401'}, \quad h=0,1,
\end{equation}
 and define the operator norm for $W_v$ ($v=x$ or $y$) by
\begin{eqnarray} &&\nonumber|||W_{v}|||_{D(s,r)\times\Pi}\\&:=&\label{091404}\sup_{0\neq
z^{(i)}\in \ell ^2_{b,d},1\leq i\leq h}
\frac{\parallel{\lfloor{\widetilde{W_{v}}}\rceil}_{D(s,r)\times\Pi}(z^{(1)},\dots,z^{(h)})\parallel}
{\parallel(z^{h})\parallel_{d,d}},\quad h\geq 1,
\end{eqnarray}
and
\begin{equation}
|||{W_v}|||_{D(s,r)\times\Pi}: =\sup_{0\neq z\in \ell
^2_{b,d}}
{\parallel{\lfloor{\widetilde{W_v}}\rceil}_{D(s,r)\times\Pi}(z)\parallel}\label{091404'}, \quad h=0.
\end{equation}Finally define the $p$-tame norm of the Hamiltonian vector field $X_W$ as
follows,
\begin{eqnarray}\label{051703}
&&\nonumber|||X_W|||_{p,D(s,r,r)\times\Pi}^T \\&:=&|||{W_y}|||_{D(s,r,r)\times\Pi}
+\frac1{r^2}|||{W_x}|||_{D(s,r,r)\times\Pi}
+\frac1r|||W_z|||_{p,D(s,r,r)\times \Pi}^T,
\end{eqnarray}where
\begin{eqnarray}\label{091405}
|||{W_v}|||_{D(s,r,r)\times\Pi}:=|||{W_v}|||_{D(s,r)\times\Pi}r^{h},\qquad v=x\ \mbox{or}\ y,
\end{eqnarray}
and
\begin{equation}\label{091403}
|||{W_z}|||_{p,D(s,r,r)\times\Pi}^T:=\max
\left\{|||{W_z}|||_{p,D(s,r)\times\Pi}^T,|||{W_z}|||_{d,D(s,r)\times\Pi}\right\}r^{h-1}.
\end{equation}
\end{definition}
\begin{remark}
In view of (\ref{091402}), $\lfloor{{JW_z}}\rceil_{D(s,r)\times\Pi}$ is required as a bounded map form $\ell^{2}_{b,p}$ to $\ell^{2}_{b,p+2}$ instead of a bounded map form $\ell^{2}_{b,p}$ to $\ell^{2}_{b,p}$ as in \cite{CLY}. This regularity is necessary to guarantee KAM iterative procedure work for the spacial dimension $d\geq 2$ (not necessary for $d=1$).
\end{remark}
\begin{remark}
Based on (\ref{091402}) and (\ref{091404})
 in Definition \ref{071803}, for each
$(x,y,z)\in\mathcal{P}^p$ and $\xi\in\Pi$, the following estimates hold
\begin{eqnarray}
&&\nonumber\parallel(W_h)_z(x,y,z;\xi)\parallel_p\\
&\leq&\parallel(W_h)_z(x,y,z;\xi)\parallel_{p+2}\nonumber\\&\leq&\label{011602}
|||{(W_h)_z}|||_{p,D(s,r)\times\Pi}^{T}\parallel z\parallel_p\parallel z\parallel_d^{\max\{h-2,0\}},\quad h\geq 2,
\end{eqnarray}
and
\begin{equation}\label{012202} ||(W_h)_v(x,y,z;\xi)||\leq
|||{(W_h)_v}|||_{D(s,r)\times\Pi}\parallel z\parallel_d^{h},\qquad h\geq1.
\end{equation}
\end{remark}
\begin{definition}\label{080204}($p$-tame norm for a general Hamiltonian)\\ Let $W(x,y,z;\xi)=\sum_{h\geq
0}W_{h}(x,y,z;\xi)$ be a Hamiltonian analytic in the variable $(x,y,z)\in
D(s,r,r)$ and $C^1$-smooth in the parameter $\xi\in\Pi$, where
$$W_{h}(x,y,z;\xi)=\sum_{\beta\in\mathbb{N}^{{\mathbb{Z}_1^d}},|\beta|=h}W_{h}^{\beta}(x,y;\xi)
z^{\beta}.$$ Then define the $p$-tame norm of the Hamiltonian
vector field $X_W$ by
\begin{equation}\label{102201}
|||X_W|||_{p,D(s,r,r)\times\Pi}^T:=\sum_{h\geq 0}
|||X_{W_{h}}|||_{p,D(s,r,r)\times\Pi}^T. \end{equation} Moreover, we say that
a Hamiltonian vector field $X_W$ (or a Hamiltonian $W(x,y,z;\xi)$)
has ${p}$-tame property on the domain $D(s,r,r)\times\Pi$, if and
only if $$ |||X_W|||_{p,D(s,r,r)\times\Pi}^{T}<\infty. $$
\end{definition}

\section{The abstract results}\label{section}

\begin{theorem}\label{T1}(Normal form of order 2)
Consider a perturbation of the integrable Hamiltonian
\begin{equation}\label{100803} H(x,y,q,\bar q;\xi)=N(y,q,\bar q;\xi)+R(x,y,q,\bar
q;\xi)
\end{equation}defined on the domain $D(s_0,r_0,r_0)\times\Pi$ with
$s_0,r_0\in(0,1]$, where
$$N(y,q,\bar q;\xi)=\sum_{i=1}^{n}\omega_i(\xi)y_i+\sum_{\textbf{j}\in\mathbb{Z}_1^d}\Omega_{\textbf{j}}
(\xi)q_{\textbf{j}}\bar
q_{\textbf{j}}$$ is a family of parameter dependent integrable Hamiltonian and
$$R(x,y,q,\bar q;\xi)=\sum_{\alpha\in\mathbb{N}^n,\beta,\gamma\in\mathbb{N}^{\mathbb{Z}_1^d}}R^{\alpha\beta\gamma}(x;\xi)y^{\alpha}q^{\beta}\bar q^{\gamma}$$ is the perturbation. Suppose the tangent
frequency and normal frequency satisfy the following assumption:

\begin{enumerate}
\item Frequency Asymptotic.
\begin{equation}\label{012801}
\omega_{i}(\xi)= |{\textbf{j}}_i|_2^2+ \xi_{{\textbf{j}}_i},\qquad   1\leq i\leq n,
\end{equation}
and
\begin{equation}\label{012802}
\Omega_{\textbf{j}}(\xi)= |\textbf{j}|_2^2+\xi_{\textbf{j}}\qquad \mbox{for} \ \textbf{j}\in\mathbb{Z}_1^{d},
\end{equation}
where $$\xi=((\xi_{\textbf{j}})_{\textbf{j}\in\mathbb{Z}^d})\in\Pi\subset\mathbb{R}^{\mathbb{Z}^d}.$$

\item Tame Property  and  smallness conditions.
The perturbation $R(x,y,q,\bar q;\xi)$ has $p$-tame property on the
domain $D(s_0,r_0,r_0)\times\Pi$ and satisfies the small assumption:
\begin{equation*}
 \varepsilon:=|||X_{R}|||^T_{p,D(s_0,r_0,r_0)\times\Pi}\leq
 \eta^{12}\epsilon, \qquad \mbox{ for some }\ \eta\in(0,1),
\end{equation*}where $\epsilon$ is a positive constant
depending on $s_0,r_0$ and $n$.

\item Spacial form of perturbation.
The perturbation $R(x,y,q,\bar q;\xi)$ is taken from a special class of analytic functions
\begin{equation}\label{04041}
\mathcal{A}=\left\{R:R=\sum_{k\in\mathbb{Z}^n,\alpha\in\mathbb{N}^n,\beta,\gamma\in\mathbb{N}^{\mathbb{Z}_1^d}}\widehat{R(k;\xi)}y^{\alpha}q^{\beta}\bar q^{\gamma}\right\}
,\end{equation}
where $k,\alpha,\beta$ has the following relation
\begin{equation}\label{04051}
\sum_{i=1}^nk_i\textbf{j}_i+\sum_{\textbf{j}\in\mathbb{Z}_1^d}(\beta_{\textbf{j}}-\gamma_{\textbf{j}})\textbf{j}=0.
\end{equation}
\end{enumerate}
Then there exists a subset
$\Pi_{\eta}\subset\Pi$ with the estimate
\begin{equation*}
\mbox{Meas}\ \Pi_{\eta}\geq(\mbox{Meas}\ \Pi)(1-O(\eta)).
\end{equation*}For each $\xi\in\Pi_{\eta}$, there is a symplectic map
$$\Psi: D(s_0/2,r_0/2,r_0/2)\rightarrow D(s_0,r_0,r_0),$$ such that
\begin{equation}\label{081601}\breve H(x,y,q,\bar q;\xi):=H\circ\Psi=
\breve N(y,q,\bar q;\xi)+ \breve R(x,y,q,\bar q;\xi),
\end{equation}where
\begin{equation}\label{082903}
\breve N(y,q,\bar
q;\xi)=\sum_{i=1}^{n}\breve{\omega}_i(\xi)y_i+\sum_{\textbf{j}\in\mathbb{Z}_1^d}\breve{\Omega}_{\textbf{j}}(\xi)q_{\textbf{j}}\bar
q_{\textbf{j}}
\end{equation}
and
\begin{equation}\label{082902}
\breve R(x,y,q,\bar
q;\xi)=\sum_{\alpha\in\mathbb{N}^n,\beta,\gamma\in\mathbb{N}^{\mathbb{Z}_1^d}
,2|\alpha|+|\beta|+|\gamma|\geq
3}\breve
R^{\alpha\beta\gamma}(x;\xi)y^{\alpha}q^{\beta}\bar{q}^{\gamma}.
\end{equation}
Moreover, the following estimates hold:\\
(1) for each $\xi\in\Pi_{\eta},$ the symplectic map
$\Psi:D(s_0/2,r_0/2,r_0/2)\rightarrow D(s_0,r_0,r_0)$
satisfies
\begin{equation}\label{080101}
\parallel\Psi-id\parallel_{p,D(s_0/2,r_0/2,r_0/2)}\leq c\eta^6\epsilon,
\end{equation}
where
\begin{equation}\label{081102}
\parallel\Psi-id\parallel_{p,D(s_0/2,r_0/2,r_0/2)}=\sup_{w\in
D(s_0/2,r_0/2,r_0/2)}\parallel(\Psi-id)w\parallel_{\mathcal{P}^p,D(s_0,r_0,r_0)},
\end{equation}
 moreover,
\begin{equation}\label{080102}
|||D\Psi-Id|||_{p,D(s_0/2,r_0/2,r_0/2)}\leq c\eta^6\epsilon,
\end{equation}
where on the left-hand side hand we use the operator
norm\footnote{where $id$ denotes the identity map from $\mathcal
P^p\to\mathcal P^p$ and $Id$ denotes its tangent map. }
\begin{equation*}
|||D\Psi-Id|||_{p,D(s_0/2,r_0/2,r_0/2)}=\sup_{0\neq w\in
D(s_0/2,r_0/2,r_0/2) }\frac{\parallel(D\Psi-Id) w\parallel_{\mathcal{P}^p,D(s_0,r_0,r_0) }}{\parallel
w\parallel_{\mathcal{P}^p,D(s_0/2,r_0/2,r_0/2)}};
\end{equation*}
\\ (2) the frequencies $\breve \omega(\xi)$ and $\breve
\Omega(\xi )$ satisfy
\begin{equation}\label{080103}
\parallel\breve\omega(\xi)-\omega(\xi)\parallel+\sup_{\textbf{j}\in\mathbb{Z}^d}\parallel\partial_{\xi_{\textbf{j}}}(\breve\omega(\xi)-\omega(\xi))\parallel\leq
{c\eta^8\epsilon},\end{equation}and
\begin{equation}\label{080104}
\parallel\breve\Omega(\xi)-\Omega(\xi)\parallel_{-2}+\sup_{\textbf{j}\in\mathbb{Z}^d
}\parallel\partial_{\xi_{\textbf{j}}}(\breve\Omega(\xi)-\Omega(\xi))\parallel_{-2}\leq
{c{\eta^8}\epsilon},
\end{equation}
where
\begin{equation}
\parallel \Omega(\xi)=(\Omega_{\textbf{j}}(\xi))_{{\textbf{j}}\in\mathbb{Z}^d_1}\parallel_{-2}:=\sup_{\textbf{j}\in\mathbb{Z}^d_1}|\Omega_{\textbf{j}}(\xi)|\textbf{j}|_2^2|;
\end{equation}
(3) the Hamiltonian vector field $X_{\breve R}$ of the new perturbed
Hamiltonian $\breve R(x,y,q,\bar q;\xi)$ satisfies
\begin{equation}\label{080105}
 |||X_{\breve R}|||^T_{p,D(s_0/2,r_0/2,r_0/2)\times\Pi_{\eta}}\leq
 \varepsilon(1+{c\eta^6\epsilon}),
\end{equation}where $c>0$ is a constant depending on $s_0,r_0$ and $n$.
\end{theorem}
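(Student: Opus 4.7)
The plan is to run a KAM-style Newton iteration in the $p$-tame framework to eliminate, in finitely many steps, all monomials $R^{\alpha\beta\gamma}(x;\xi)\,y^{\alpha}q^{\beta}\bar q^{\gamma}$ in the perturbation with $2|\alpha|+|\beta|+|\gamma|\leq 2$, apart from the resonant pieces (linear in $y$ and diagonal in $q\bar q$), which are absorbed into the shifts $\omega\mapsto\breve\omega$, $\Omega\mapsto\breve\Omega$. Since the regularity built into the $p$-tame norm (the gain $\ell^{2}_{b,p}\to\ell^{2}_{b,p+2}$ of $JW_{z}$) and the $p$-tame norm itself are preserved under Poisson bracket, time-$1$ Hamiltonian flow, and composition by the lemmas collected in Section~6, the iteration can be executed along the lines of \cite{CLY}; only the small-divisor analysis, forced by periodic boundary conditions and dimension $d\geq 1$, really differs.

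At step $m$ I would split $R_{m}=R_{m}^{\mathrm{low}}+R_{m}^{\mathrm{high}}$, the low part being the monomials with $2|\alpha|+|\beta|+|\gamma|\leq 2$, and solve the homological equation $\{N_{m},F_{m+1}\}+R_{m}^{\mathrm{low}}=[R_{m}^{\mathrm{low}}]$, where $[R_{m}^{\mathrm{low}}]$ denotes the resonant (normal form) part. In Fourier coefficients this reduces to
\begin{equation*}
\bigl(\sqrt{-1}\langle k,\omega(\xi)\rangle+\sqrt{-1}\langle\beta-\gamma,\Omega(\xi)\rangle\bigr)\,\widehat{F_{m+1}^{\alpha\beta\gamma}}(k;\xi)=\widehat{R_{m}^{\alpha\beta\gamma}}(k;\xi)
\end{equation*}
for the non-resonant quadruples $(k,\alpha,\beta,\gamma)$, while the resonant coefficients feed into $\breve\omega-\omega$ and $\breve\Omega-\Omega$ and give (\ref{080103})--(\ref{080104}) via Cauchy estimates. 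Pushing $H_{m}$ forward by the time-$1$ map $\Psi_{m+1}=\Phi^{1}_{F_{m+1}}$ and invoking the Section~6 Poisson-bracket estimates yields a quadratic bound $\varepsilon_{m+1}\leq C\eta^{-c}\varepsilon_{m}^{2}$ on the new perturbation in $p$-tame norm, together with the usual geometric shrinking of analyticity radii from $s_{0},r_{0}$ down to $s_{0}/2,r_{0}/2$; iterating produces a convergent composition $\Psi=\Psi_{1}\circ\Psi_{2}\circ\cdots$ satisfying (\ref{080101})--(\ref{080102}) and (\ref{080105}).

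The hardest step is the measure estimate for the small divisors $\langle k,\omega(\xi)\rangle+\sigma_{1}\Omega_{\textbf{j}}(\xi)+\sigma_{2}\Omega_{\textbf{l}}(\xi)$ with $\sigma_{i}\in\{-1,0,1\}$, at most two normal modes appearing because $|\beta|+|\gamma|\leq 2$. By (\ref{012801})--(\ref{012802}) each such divisor is affine in $\xi$, so whenever one of the $\xi_{\textbf{j}'}$'s appears with a nonzero coefficient the classical slice argument removes a set of measure $\lesssim\eta$ per quadruple $(k,\alpha,\beta,\gamma)$. The delicate case is $\sigma_{1}=1$, $\sigma_{2}=-1$ with $|\textbf{j}|_{2}^{2}=|\textbf{l}|_{2}^{2}$: the integer part vanishes and in dimension $d\geq 2$ many lattice sites share one Laplacian eigenvalue, so transversality in $\xi$ is a priori lost. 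Here the spacial-form condition (\ref{04051}) is decisive: it forces $\textbf{j}-\textbf{l}=-\sum_{i}k_{i}\textbf{j}_{i}$, pinning $\textbf{l}$ once $\textbf{j}$ and $k$ are fixed, and since $\textbf{j},\textbf{l}\in\mathbb{Z}_{1}^{d}$ are disjoint from the tangential sites $S$, whenever $\textbf{j}\neq\textbf{l}$ the coefficient of $\xi_{\textbf{j}}$ in the divisor is exactly $1$, so transversality in the parameter $\xi\in\mathbb{R}^{\mathbb{Z}^{d}}$ is restored. Summing the bad sets over $k$ against the analytic weight $e^{-|k|s_{m}}$ and over the finitely many low-order $(\alpha,\beta,\gamma)$ produces a total excluded measure $O(\eta)\,\mathrm{Meas}\,\Pi$, matching the bound on $\Pi_{\eta}$; the detailed verification is deferred to Section~5.
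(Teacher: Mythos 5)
Your overall route --- a quadratically convergent KAM/Newton iteration in the $p$-tame framework, with the homological equation, the Poisson-bracket and flow lemmas of Section~6, and the momentum condition (\ref{04051}) invoked to defeat the multiplicity of the Laplacian eigenvalues --- is exactly the route the paper takes (the paper only sketches the iteration itself, declaring it parallel to \cite{CLY}, and concentrates its detailed effort on the measure estimate). However, your measure estimate has a concrete gap. You propose to sum the excised sets ``over the finitely many low-order $(\alpha,\beta,\gamma)$'', but there are infinitely many such multi-indices: $\beta,\gamma\in\mathbb{N}^{\mathbb{Z}_1^d}$ with $|\beta|+|\gamma|\leq 2$ select one or two normal sites out of all of $\mathbb{Z}_1^d$. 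Momentum conservation pins $\textbf{l}=\textbf{j}+\sum_ik_i\textbf{j}_i$ once $k$ and $\textbf{j}$ are fixed, but $\textbf{j}$ still ranges over infinitely many lattice points (for $d\geq2$ infinitely many even after fixing $|\textbf{j}|_2^2-|\textbf{l}|_2^2=m$, since that is a hyperplane condition), and each excised slice has measure of order $\eta$ that does not decay in $\textbf{j}$, so the naive union over all pairs need not be small. (There is also a minor internal inconsistency: you announce the elimination ``in finitely many steps'' but then correctly describe an infinite composition $\Psi_1\circ\Psi_2\circ\cdots$; only the latter can produce the clean conclusion (\ref{082902}).)

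The paper closes exactly this hole (in the analogous estimate of Section~5, case $\mathcal{L}_{2-}$) by feeding the regularity encoded in the $\|\cdot\|_{-2}$ bound (\ref{080104}) into the small-divisor count: one writes $\breve\Omega_{\textbf{i}}-\breve\Omega_{\textbf{j}}=|\textbf{i}|_2^2-|\textbf{j}|_2^2+r_{\textbf{i}\textbf{j}}$ with $|r_{\textbf{i}\textbf{j}}|\leq Cm/|\textbf{j}|_2^2$, so that for $|\textbf{j}|_2\geq|\textbf{j}_0|_2$ all resonant sets sharing the same integer $m$ are contained in a single set $\mathcal{Q}_{k\tilde lm\textbf{j}_0}$, while only finitely many $m$ (those with $|m|\lesssim|k|\,\|\breve\omega\|+\mathcal{N}^2$) can produce a small divisor; the union thus becomes effectively finite, following Bambusi--Berti--Magistrelli. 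You cite the $\ell^2_{b,p}\to\ell^2_{b,p+2}$ smoothing as needed for the iteration but never use it in the measure estimate, which is where it is actually indispensable; without this step the construction of $\Pi_\eta$ with $\mbox{Meas}\,\Pi_{\eta}\geq(\mbox{Meas}\,\Pi)(1-O(\eta))$ does not go through.
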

\begin{remark}
This theorem is parallel to Theorem in \cite{CLY} and is essentially due to a standard KAM proof. The same as in \cite{CLY}, the tame property (\ref{080105}) of $X_{\breve R}$ can be verified explicitly in view of Lemmas \ref{021102}-\ref{081503}. Moreover, as a corollary of this theorem, the existence and long time stability can be obtained directly.
\end{remark}
Given a large $\mathcal{N}\in\mathbb{N}$, split the normal frequency
$\breve{\Omega}(\xi)$ and normal variable $(q,\bar q)$ into two parts
respectively, i.e.
$$\breve{\Omega}(\xi)=(\tilde {\Omega}(\xi),\hat{\Omega}(\xi)),\qquad q=(\tilde
q,\hat q),\qquad \bar q=(\tilde{\bar q},\hat {\bar q}),$$ where
$$\tilde
{\Omega}(\xi)=(\breve{\Omega}_{\textbf{j}}(\xi))_{|\textbf{j}|_2\leq \mathcal{N}},\quad
\tilde q=(q_{\textbf{j}})_{|\textbf{j}|_2\leq \mathcal{N}},\quad \tilde{\bar q}=(\bar
q_{\textbf{j}})_{|\textbf{j}|_2\leq\mathcal{N}}$$ are the low frequencies  and
$$
\hat{\Omega}(\xi)=(\breve{\Omega}_{\textbf{j}}(\xi))_{|\textbf{j}|_2>\mathcal{N}},\quad
\hat q=(q_{\textbf{j}})_{|\textbf{j}|_2>\mathcal{N}},\quad \hat{\bar
q}=(\bar q_{\textbf{j}})_{|\textbf{j}|_2>\mathcal{N}}$$
 are the high frequencies. Given $0<\tilde \eta<1$,
and $\tau> 2n+5$, if the frequencies $\breve{\omega}(\xi)$ and
$\breve{\Omega}(\xi)$ satisfy the following inequalities
\begin{equation}\label{090302} \left|\langle
k,\breve{\omega}(\xi)\rangle+\langle \tilde
l,\tilde{\Omega}(\xi)\rangle+\langle
\hat{l},\hat{\Omega}(\xi)\rangle\right|\geq\frac{\tilde\eta}{4^{3\mathcal{M}}(|
k|+1)^{\tau}C(\mathcal{N},\tilde l)},
\end{equation}
with $$|k|+|\tilde l|+|\hat l|\neq 0,\quad |\tilde l|+|\hat
l|\leq \mathcal{M}+2,\quad |\hat l|\leq 2,$$ where
\begin{equation}\label{121206}
C(\mathcal{N},\tilde l)=\mathcal{N}^{3(|\tilde l|+4)^2},
\end{equation}
then we call that the
frequencies $\breve{\omega}(\xi)$ and $\breve{\Omega}(\xi)$ are
$(\tilde\eta,\mathcal{N},\mathcal{M})$-non-resonant.

\begin{remark}
Denote the resonant set $\mathcal{R}_{k\tilde l\hat l}$ by
\begin{equation}
\mathcal{R}_{k\tilde l\hat l}=\left\{\xi\in\Pi_{\eta}\bigg| \left|\langle
k,\breve{\omega}(\xi)\rangle+\langle \tilde
l,\tilde{\Omega}(\xi)\rangle+\langle
\hat{l},\hat{\Omega}(\xi)\rangle\right|<\frac{\tilde\eta}{4^{3\mathcal{M}}(|
k|+1)^{\tau}C(\mathcal{N},\tilde l)}\right\},
\end{equation}
where $\Pi_{\eta}$ is given in Theorem \ref{T1}, and denote
\begin{equation}\label{0912081}
\mathcal{R}=\bigcup_{|k|+|\tilde l|+|\hat l|\neq 0,\ |\tilde l|+|\hat
l|\leq \mathcal{M}+2,\ |\hat l|\leq 2}\mathcal{R}_{k\tilde l\hat l}.
\end{equation}
Then for each
\begin{equation}\label{032501}\xi\in\tilde \Pi:=\Pi_{\eta}\setminus \mathcal{R},
\end{equation} the
frequencies $\breve{\omega}(\xi)$ and $\breve{\Omega}(\xi)$ are
$(\tilde\eta,\mathcal{N},\mathcal{M})$-non-resonant.
\end{remark}
\begin{theorem}\label{thm7.1} (Partial normal form of order $\mathcal{M}+2$)
Consider the normal form of order 2 $$\breve H(x,y,q,\bar
q;\xi)=\breve N(y,q,\bar q;\xi)+\breve R(x,y,q,\bar q;\xi)$$
obtained in Theorem \ref{T1}. Suppose $\xi\in\tilde\Pi$, which is defined in (\ref{032501}), for some positive integers
$\mathcal{N},\mathcal{M}$ and $0<\tilde \eta<1$, there exist a small $\rho_0>0$
depending on
$s_0,r_0,n,\tilde\eta, \mathcal{N}$ and $\mathcal{M}$, and for each $0<\rho<\rho_0$, there is a symplectic
map
$$\Phi: D(s_0/4,4\rho,4\rho)\rightarrow D(s_0/2,5\rho,5\rho),$$ such
that
\begin{equation}\label{020302}
\breve H\circ\Phi={\breve N}(y,q,\bar q;\xi)+{Z}(y,q,\bar q;\xi)+{
P}(x,y,q,\bar q;\xi)+{ Q}(x,y,q,\bar q;\xi)
\end{equation}
is a partial normal form of order $\mathcal{M}+2$, where
\begin{eqnarray*}
{ Z}(y,q,\bar q;\xi)&=&\sum_{4\leq 2|\alpha|+2|\beta|+2|\mu|\leq
\mathcal{M}+2,|\mu|\leq1}{
Z}^{\alpha\beta\beta\mu\mu}(\xi)y^{\alpha}\tilde{q}^{\beta}\tilde{\bar
q}^{\beta}\hat{q}^{\mu}\hat{\bar{q}}^{\mu}
\end{eqnarray*}is the integrable term depending only on $y$ and $I_{\textbf{j}}=|q_{\textbf{j}}|^2,{\textbf{j}}\in\mathbb{Z}_1^d$, and where
\begin{eqnarray*}  
{P}(x,y,q,\bar
q;\xi)&=&\sum_{2|\alpha|+|\beta|+|\gamma|+|\mu|+|\nu|\geq
\mathcal{M}+3,|\mu|+|\nu|\leq 2} { P}^{\alpha
\beta\gamma\mu\nu}(x;\xi)y^{\alpha}{\tilde q}^{\beta}{\tilde{\bar
q}}^{\gamma}{\hat q}^{\mu}{\hat{\bar q}^{\nu}},
\end{eqnarray*}and
\begin{equation*}
{ Q}(x,y,q,\bar q;\xi)=\sum_{|\mu|+|\nu|\geq 3}{ Q}^{\alpha
\beta\gamma\mu\nu}(x;\xi)y^{\alpha}{\tilde q}^{\beta}{\tilde{\bar
q}}^{\gamma}{\hat q}^{\mu}{\hat{\bar q}^{\nu}}.
\end{equation*}
Moreover, we have the following estimates:\\
(1) the symplectic map $\Phi$ satisfies
\begin{equation}\label{091812}
\parallel\Phi-id\parallel_{p,D(s_0/4,4\rho,4\rho)}\leq \frac{c\mathcal{N}^{294}\rho}{\tilde\eta^2},
\end{equation}
and
\begin{equation}
|||D\Phi-Id|||_{p,D(s_0/4,4\rho,4\rho)}\leq \frac{c\mathcal{N}^{294}}{\tilde\eta^2};
\end{equation}
\\
(2) the Hamiltonian vector fields $X_Z,X_P$ and $X_Q$ satisfy
\begin{equation*}
|||X_{{ Z}}|||^T_{p,D(s_0/4,4\rho,4\rho)\times\tilde{\Pi}}\leq
c\rho
\left(\frac1{\tilde\eta^2}\mathcal{N}^{6(\mathcal{M}+6)^2}\rho\right),
\end{equation*}
\begin{equation}\label{091304}
|||X_{{ P}}|||^T_{p,D(s_0/4,4\rho,4\rho)\times\tilde{\Pi}}\leq
c
\rho\left(\frac1{\tilde\eta^2}\mathcal{N}^{6(\mathcal{M}+7)^2}\rho\right)^{\mathcal{M}},\qquad
 \end{equation}and
\begin{equation*}
|||X_{{ Q}}|||^T_{p,D(s_0/4,4\rho,4\rho)\times\tilde{\Pi}}\leq
c \rho,
\end{equation*}
where $c>0$ is a constant depending on $s_0,r_0,n$ and
$\mathcal{M}$.
\end{theorem}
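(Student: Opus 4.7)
The proof strategy is an iterative Birkhoff normal form procedure over $\mathcal{M}$ steps, each implemented by a Lie-series canonical transformation $\Phi_m = \exp(X_{F_m})$, where $F_m$ is a generating Hamiltonian designed to eliminate the non-resonant part of the perturbation at total monomial order $m+2$ having at most two high-frequency legs. Terms with three or more high-frequency legs ($|\mu|+|\nu|\geq 3$) are never touched and accumulate into $Q$; terms already in genuine normal form (depending only on $y$ and the actions $I_{\textbf{j}}=q_{\textbf{j}}\bar q_{\textbf{j}}$, with $|\mu|\leq 1$) are preserved and contribute to $Z$; all remaining low/mixed terms of order $m+2$ are transformed away. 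After $\mathcal{M}$ steps every surviving non-normal-form, low/mixed contribution has order at least $\mathcal{M}+3$ and becomes part of $P$. The full $\Phi$ is the composition $\Phi_1\circ\cdots\circ\Phi_\mathcal{M}$.

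At step $m$, writing the piece to be eliminated as
\[
R_m^{\mathrm{res}}=\sum \widehat R_m^{\alpha\beta\gamma\mu\nu}(k;\xi)\,e^{\sqrt{-1}\langle k,x\rangle}y^\alpha\tilde q^\beta\tilde{\bar q}^\gamma \hat q^\mu\hat{\bar q}^\nu,
\]
summed over $(k,\beta-\gamma,\mu-\nu)\neq 0$ with $|\mu|+|\nu|\leq 2$ and $2|\alpha|+|\beta|+|\gamma|+|\mu|+|\nu|=m+2$, the homological equation $\{\breve N,F_m\}=R_m^{\mathrm{res}}$ is solved coefficient-by-coefficient by dividing by $\sqrt{-1}(\langle k,\breve\omega\rangle+\langle\beta-\gamma,\tilde\Omega\rangle+\langle\mu-\nu,\hat\Omega\rangle)$. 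The $(\tilde\eta,\mathcal{N},\mathcal{M})$-non-resonance assumption \eqref{090302} bounds this divisor below by $\tilde\eta/(4^{3\mathcal{M}}(|k|+1)^\tau \mathcal{N}^{3(|\beta-\gamma|+4)^2})$, exactly in the range needed since the constraint $|\hat l|\leq 2$ matches $|\mu|+|\nu|\leq 2$. I would then use the $p$-tame estimates collected in Section 6 to bound $|||X_{F_m}|||^T$ and propagate the bound via the Lie series expansion to the new perturbation $R_{m+1}$, which now contains only monomials of order $\geq m+3$ with $|\mu|+|\nu|\leq 2$ plus untouched high-frequency contributions.

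The domain is contracted slightly at each step, say by $s_0/(8\mathcal{M})$ in the angle and a fixed fraction of $\rho$ in amplitude, so that after $\mathcal{M}$ steps one lands in $D(s_0/4,4\rho,4\rho)$. The smallness threshold $\rho_0$ is chosen so that the single-step quantity $\tilde\eta^{-2}\mathcal{N}^{6(\mathcal{M}+7)^2}\rho$ is far smaller than $1$; this forces convergence of each Lie series and, after $\mathcal{M}$ compositions, produces the final exponent appearing in \eqref{091304}. The estimates for $\Phi-\mathrm{id}$ and $D\Phi-\mathrm{Id}$ then follow by telescoping the single-step bounds, while $X_Z$ inherits its size from the first resonant step and $X_Q$ is controlled directly by the size of the original high-frequency part of $\breve R$.

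The main obstacle is twofold. First, one must track the combinatorial growth of the $\mathcal{N}$-exponent across $\mathcal{M}$ iterations: each step picks up a factor roughly $\mathcal{N}^{3(|\tilde l|+4)^2}$ with $|\tilde l|\leq \mathcal{M}+2$, and these compound to yield $\mathcal{N}^{6(\mathcal{M}+7)^2}$. Second, one must verify that the $p$-tame property genuinely persists under the Poisson-bracket iteration when $F_m$ carries the small-divisor denominator; this turns on the regularity built into the definition \eqref{091402} of the $p$-tame norm, namely that $\lfloor JW_z\rceil$ is bounded $\ell^2_{b,p}\to\ell^2_{b,p+2}$, which compensates the $|\textbf{j}|_2^2$ weight coming from $\breve N$ when commuting — exactly the point flagged in the introduction as essential for $d\geq 2$.
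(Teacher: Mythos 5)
Your proposal matches the paper's approach: the paper itself does not write out the iteration for Theorem \ref{thm7.1} but explicitly defers it to the parallel construction in \cite{CLY}, proving in detail only the measure estimate for the non-resonant set $\tilde\Pi$ (Section 5) and supplying the tame-norm lemmas of Section 6 that make the Lie-series/homological-equation steps you describe go through. Your identification of the small divisors with $\tilde l=\beta-\gamma$, $\hat l=\mu-\nu$ under the constraints $|\hat l|\leq 2$, $|\tilde l|+|\hat l|\leq\mathcal{M}+2$, and of the $\ell^2_{b,p}\to\ell^2_{b,p+2}$ regularity as the point that saves the iteration for $d\geq 2$, is exactly the intended argument.
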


Based on the partial normal form of order $\mathcal{M}+2$ and $p$-tame property, we obtain the long time stability of KAM tori as follows:
\begin{theorem}\label{T3}(The long time stability of KAM tori) Based on the partial normal form (\ref{020302}), for any $p\geq
24(\mathcal{M}+7)^{4}+1$ and $0<\delta<\rho$, the KAM tori ${\mathcal T}$ are stable in
long time, i.e. if $w(t)$ is a solution of Hamiltonian vector field
$X_{{H}}$ with the initial datum $w(0)=( w_x(0), w_y(0),
w_q(0),w_{\bar q}(0))$ satisfying
\begin{equation*}
d_{p}( w(0),{\mathcal{T}})\leq \delta,
\end{equation*}
then
\begin{equation}\label{111905}
d_{p}(w(t),{\mathcal{T}})\leq 2\delta,
\qquad\mbox{for all}\ |t|\leq \delta^ {-\mathcal{M}}.
\end{equation}
\end{theorem}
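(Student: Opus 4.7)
The plan is to push everything through the symplectic map $\Phi$ of Theorem \ref{thm7.1}: set $v(t):=\Phi^{-1}(w(t))$, so that the dynamics is governed by $\breve H\circ\Phi=\breve N+Z+P+Q$. Since $\Phi$ is $p$-close to the identity by (\ref{091812}), an initial point at $H^p$-distance $\delta$ from $\mathcal T$ is sent, up to a multiplicative constant, to $v(0)$ with $\parallel y(0)\parallel+\parallel z(0)\parallel_p^2 \leq C\delta^2$; conversely, once one shows that this quantity remains $\leq 4\delta^2$ along the flow, composing back with $\Phi$ yields $d_p(w(t),\mathcal T)\leq 2\delta$. I would take $\rho$ a fixed multiple of $\delta$ (so that $v(t)$ stays in $D(s_0/4,4\rho,4\rho)$ throughout the bootstrap) and choose $\tilde\eta,\mathcal N$ as mild negative powers of $\delta$ to balance the final estimates.

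The natural Lyapunov function is $E(v):=\parallel y\parallel+\parallel z\parallel_p^2$, which up to universal constants equals the squared $\mathcal P^p$-distance from the reduced torus $\{y=0,\,z=0\}$. The crucial structural observation is that both $\breve N$ and $Z$ depend only on the actions $y_i$ and $I_{\textbf{j}}=|q_{\textbf{j}}|^2$, hence their Hamiltonian flows preserve every $y_i$ and every $|q_{\textbf{j}}|$; in particular $\{E,\breve N+Z\}\equiv 0$, and $\dot E=\{E,P\}+\{E,Q\}$.

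For the $P$-term, a direct Poisson-bracket computation together with the pointwise tame inequalities (\ref{011602})--(\ref{012202}) produces $|\{E,P\}|\leq C\delta\cdot|||X_P|||^T_p$, and inserting the vector-field estimate (\ref{091304}) with $\rho\leq C\delta$ gives
\[
|\{E,P\}|\;\leq\; C\,\delta^{\mathcal M+2}\,\tilde\eta^{-2\mathcal M}\mathcal N^{6\mathcal M(\mathcal M+7)^2}.
\]
For the $Q$-term, even though $|||X_Q|||^T_{p}\leq c\rho$ is not itself small, the defining constraint $|\mu|+|\nu|\geq 3$ forces every monomial to contain at least three high-frequency factors $q_{\textbf{j}},\bar q_{\textbf{j}}$ with $|\textbf{j}|_2>\mathcal N$. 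Exploiting the elementary interpolation $\parallel\hat z\parallel_d\leq\mathcal N^{d-p}\parallel z\parallel_p$ to upgrade three of the $\parallel z\parallel_d$-factors in the tame multilinear bound to $\mathcal N^{d-p}\delta$ each produces
\[
|\{E,Q\}|\;\leq\; C\,\mathcal N^{3(d-p)}\,\delta^{5}.
\]
The hypothesis $p\geq 24(\mathcal M+7)^4+1$ makes the exponent $3(p-d)$ so large that for a moderate choice $\mathcal N\sim\delta^{-\kappa}$ this is negligible compared to the $P$-contribution.

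A standard continuity bootstrap then closes the argument: so long as $E(v(t))\leq 4\delta^2$ on a maximal interval, both estimates above are valid, and integrating yields $|E(v(t))-E(v(0))|\leq C\delta^{\mathcal M+2}\cdot|t|\leq C\delta^2$ for $|t|\leq\delta^{-\mathcal M}$. Choosing $\tilde\eta$ and $\mathcal N$ so that $\tilde\eta^{-2\mathcal M}\mathcal N^{6\mathcal M(\mathcal M+7)^2}$ is bounded by a fixed constant independent of $\delta$ then closes the bootstrap at $E\leq 4\delta^2$, and the near-identity property of $\Phi$ converts this back to the $2\delta$-bound for $w(t)$. The \emph{main obstacle} is the bookkeeping in the $Q$-estimate: one must identify precisely which slots of the multilinear form $\lfloor\widetilde{JQ_z}\rceil$ are constrained to high frequencies in order to insert the factor $\mathcal N^{d-p}$ there, while retaining a full $\parallel z\parallel_p$-factor needed because the output of the $z$-component of $X_Q$ is measured in $\ell^2_{b,p+2}$, rather than naively applying the $p$-tame bound.
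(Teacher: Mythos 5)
The paper never writes out a proof of Theorem \ref{T3}: the stability argument is deferred to \cite{CLY} (ultimately to the scheme of \cite{BG}), and only the measure estimate of Section 5 is proved in detail. Your proposal reconstructs precisely that intended argument --- conjugate by $\Phi$, use that $\breve N+Z$ depends only on $y$ and the actions $I_{\textbf{j}}$ so that $\parallel y\parallel+\parallel z\parallel_p^2$ is conserved by its flow, control the $P$-drift through (\ref{091304}) and the $Q$-drift through the three forced high-frequency factors, then bootstrap over $|t|\leq\delta^{-\mathcal M}$ --- so in structure you are doing exactly what the paper relies on.

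Two points need repair. First, your closing requirement that $\tilde\eta^{-2\mathcal M}\mathcal N^{6\mathcal M(\mathcal M+7)^2}$ be bounded by a constant independent of $\delta$ is incompatible with your own $Q$-estimate: making $\mathcal N^{3(d-p)}\delta^{-\mathcal M}$ small forces $\mathcal N\gtrsim\delta^{-(\mathcal M-3)/(3(p-d))}\to\infty$. What actually closes the argument is the slack of one full power of $\delta$: since $P$ has order at least $\mathcal M+3$ while the time scale is only $\delta^{-\mathcal M}$, the accumulated $P$-drift is of size $\delta^{2}\cdot\delta\cdot\big(\tilde\eta^{-2}\mathcal N^{6(\mathcal M+7)^2}\big)^{\mathcal M}$, so it suffices that the loss factor be $O(\delta^{-1})$; with $\mathcal N=\delta^{-\kappa}$, $\tilde\eta=\delta^{\kappa'}$ one needs $6\mathcal M(\mathcal M+7)^2\kappa+2\mathcal M\kappa'\leq 1$ together with $3(p-d)\kappa\geq\mathcal M-3$, and the hypothesis $p\geq 24(\mathcal M+7)^4+1$ is exactly what makes these compatible. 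As stated, your parameter choice cannot be made. Second, the $Q$-bookkeeping you label the ``main obstacle'' is a genuine step you have left open: after differentiating in $z$ one high-frequency slot is consumed, and the distinguished $\parallel\cdot\parallel_p$-slot in (\ref{011602}) yields no $\mathcal N^{d-p}$ gain, so in the worst case only \emph{one or two} of the three high factors land in $d$-slots; correspondingly your bound $\mathcal N^{3(d-p)}\delta^5$ should be the weaker $\mathcal N^{(d-p)}\delta^{3}$ or $\mathcal N^{2(d-p)}\delta^{3}$. This still suffices, because one only needs $|\{E,Q\}|\lesssim\delta^{\mathcal M+2}$ and $p-d$ is enormous, but the exponent you wrote is not the one the tame norm delivers, and the verification that it closes must be redone with the correct one.
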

\section{Proof of Theorem \ref{T4}}
\begin{proof}

{Firstly, write equation (\ref{26}) as an infinite dimensional  Hamiltonian system.}

Here we assume that the operator $A=-\triangle+M_{\xi}$ with periodic boundary conditions has eigenvalues $\lambda_\textbf{j}$ satisfying
\begin{eqnarray}
\lambda_\textbf{j}=|\textbf{j}|_2^2+\xi_\textbf{j},\qquad \textbf{j}\in\mathbb{Z}^d,
\end{eqnarray}
and the corresponding eigenfunctions $$\phi_\textbf{j}(x)=\frac1{(2\pi)^{d/2}}e^{\langle \textbf{j},x\rangle}$$ form a basis in the domain of the operator.

Introducing $v=u_t$, (\ref{26}) reads
\begin{eqnarray}
u_t&=&v,\nonumber\\
v_t&=&-A^2u-\varepsilon f(u).
\end{eqnarray}
Letting
\begin{equation}
q=\frac1{\sqrt{2}}A^{\frac12}u-\sqrt{-1}\frac1{\sqrt{2}}A^{-\frac12}v,
\end{equation}
we obtain
\begin{equation}\label{031301}
-\sqrt{-1}q_t=Aq+\frac{\varepsilon}{\sqrt{2}}A^{-\frac12}f\left(A^{-\frac12}\left(\frac{q+\bar q}{\sqrt{2}}\right)\right).
\end{equation}
Equation (\ref{031301}) can be rewritten as the Hamiltonian equations
\begin{equation}\label{031302}
q_t=\sqrt{-1}\frac{\partial H}{\partial \bar q},
\end{equation}
and the corresponding Hamiltonian is
\begin{equation}
H=\frac12(Aq,q)+\varepsilon \int_{\mathbb{T}^d}g\left(A^{-\frac12}\left(\frac{q+\bar q}{\sqrt{2}}\right)\right)dx,
\end{equation}
where $(\cdot,\cdot)$ denotes the inner product in $L^2$ and $g$ is a primitive of $f$.

Let
\begin{equation}
q(x)=\sum_{\textbf{j}\in\mathbb{Z}^d}q_\textbf{j}\phi_\textbf{j}(x).
\end{equation}
Thus system (\ref{031302}) is equivalent to the lattice Hamiltonian equations
\begin{equation}\label{031401}
\dot{q}_\textbf{j}=\sqrt{-1}\left(\lambda_\textbf{j}q_\textbf{j}+\varepsilon\frac{\partial G}{\partial \bar{q}_\textbf{j}}\right),\qquad G(q,\bar q):=\int_{\mathbb{T}^d}g\left(\sum_{\textbf{j}\in\mathbb{Z}^d}\frac{q_\textbf{j}\phi_\textbf{j}+\bar q_\textbf{j}\bar\phi_\textbf{j}}{\sqrt{2\lambda_\textbf{j}}}\right)dx
\end{equation}
with the corresponding Hamiltonian function
\begin{equation}
H(q,\bar q)=\sum_{\textbf{j}\in\mathbb{Z}^d}\lambda_\textbf{j}q_\textbf{j}\bar q_\textbf{j}+\varepsilon G(q,\bar q).
\end{equation}
Since $f(u)$ is real analytic in $u$, $g(q,\bar q)$ is real analytic in $q,\bar q$. Making use of
\begin{equation}
q(x)=\sum_{\textbf{j}\in\mathbb{Z}^d}q_\textbf{j}\phi_\textbf{j}(x)
\end{equation}
again, we may rewrite $g(q,\bar q)$ as follows
\begin{equation}
g(q,\bar q)=\sum_{\alpha,\beta}g^{\alpha\beta}q^{\alpha}\bar q^{\beta}\phi^{\alpha}\bar \phi^{\beta}.
\end{equation}
Hence,
\begin{equation}\label{01019}
G(q,\bar q):=\int_{\mathbb{T}^d}g\left(\sum_{\textbf{j}\in\mathbb{Z}^d}\frac{q_\textbf{j}\phi_\textbf{j}+\bar q_\textbf{j}\bar\phi_\textbf{j}}{\sqrt{2\lambda_\textbf{j}}}\right)dx=\sum_{\alpha,\beta}G^{\alpha\beta}q^{\alpha}\bar q^{\beta},
\end{equation}
where
\begin{equation}\label{11112}
G^{\alpha\beta}=0,\qquad \mbox{if}\quad
\sum_{\textbf{j}\in\mathbb{Z}^d}(\alpha_\textbf{j}-\beta_\textbf{j})\textbf{j}\neq 0.
\end{equation}
To simply the proof, we assume $f(u)=u^3$ without loss of generality. following example 3.2 in \cite{BG}, we have
\begin{equation}
\parallel X_{G(q^{(1)},q^{(2)},q^{(3)})}\parallel_p \leq c_p \parallel z^3 \parallel{p,d},
\end{equation}
and
\begin{equation}\label{00001}
\parallel X_{G(q^{(1)},q^{(2)},q^{(3)})}\parallel_d \leq c_p \parallel z^3 \parallel{d,d}.
\end{equation}
Furthermore, as in \cite{P1}, the perturbation $G(q,\bar{q})$ is more regular in the following sense
\begin{equation}\label{00002}
\parallel X_{G(q^{(1)},q^{(2)},q^{(3)})}\parallel_{p+2} \leq c_p \parallel z^3 \parallel{p,d}.
\end{equation}

As in \cite{GY20061}, the perturbation $G(q,\bar q)$ in (\ref{031401}) has the following regularity property.
\begin{lemma}
For any fixed $p>d/2$, the gradient $G_{\bar q}$ is a map in a neighbourhood of the origin with
\begin{equation}
\parallel G_{\bar q}\parallel_{p+2}\leq c\parallel q\parallel_p^3.
\end{equation}
\end{lemma}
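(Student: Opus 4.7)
The plan is to exploit the double smoothing effect of the operator $A^{-1/2}$: once hidden in the definition of $G$ and once exposed as an extra Fourier factor in its gradient. Since the excerpt reduces to $f(u)=u^3$, so that $g(u)=\tfrac14 u^4$, I would introduce the auxiliary function
\begin{equation*}
u(x):=A^{-1/2}\!\left(\frac{q(x)+\bar q(x)}{\sqrt 2}\right)=\sum_{\textbf{j}\in\mathbb{Z}^d}\frac{q_{\textbf{j}}\phi_{\textbf{j}}(x)+\bar q_{\textbf{j}}\bar\phi_{\textbf{j}}(x)}{\sqrt{2\lambda_{\textbf{j}}}},
\end{equation*}
so that $G(q,\bar q)=\tfrac14\int_{\mathbb{T}^d}u^4\,dx$. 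A direct Fourier computation then yields
\begin{equation*}
(G_{\bar q})_{\textbf{j}}=\frac{1}{\sqrt{2\lambda_{\textbf{j}}}}\,\widehat{u^3}(\textbf{j}),
\end{equation*}
which isolates the two factors $\lambda_{\textbf{j}}^{-1/2}\sim|\textbf{j}|_2^{-1}$ that will each produce one derivative of gain.

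Next I would carry out three routine estimates. First, from $\lambda_{\textbf{j}}\gtrsim|\textbf{j}|_2^2$ one sees at once that $\|u\|_{H^{p+1}(\mathbb{T}^d)}\leq c\|q\|_p$, since each $\lambda_{\textbf{j}}^{-1/2}$ absorbs one power of $|\textbf{j}|_2$. Second, because the hypothesis $p>d/2$ forces $p+1>d/2$, the Sobolev space $H^{p+1}(\mathbb{T}^d)$ is a Banach algebra, so that $\|u^3\|_{H^{p+1}}\leq c\|u\|_{H^{p+1}}^3$. Third, the remaining factor $\lambda_{\textbf{j}}^{-1/2}$ in the Fourier coefficients of $G_{\bar q}$ converts the $H^{p+1}$-bound on $u^3$ into an $H^{p+2}$-bound on $G_{\bar q}$:
\begin{equation*}
\|G_{\bar q}\|_{p+2}^2=\sum_{\textbf{j}\in\mathbb{Z}^d}|\textbf{j}|_2^{2(p+2)}\frac{|\widehat{u^3}(\textbf{j})|^2}{2\lambda_{\textbf{j}}}\;\lesssim\;\|u^3\|_{H^{p+1}}^2\leq c\|q\|_p^6.
\end{equation*}
Taking square roots yields the claimed cubic estimate.

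There is no genuine obstacle here; the argument is essentially a bookkeeping of derivative gains combined with the Sobolev algebra property. If $f$ were kept general (real-analytic with $f(0)=f'(0)=0$) rather than specialized to $u^3$, one would Taylor expand $g$, apply the same argument monomial by monomial, and sum the resulting convergent series in a small neighbourhood of the origin; the cubic contribution produces the stated $\|q\|_p^3$ and all higher-order contributions are absorbed by additional positive powers of $\|q\|_p$. This is precisely the regularity property (invoked via \cite{GY20061}) on which the $p$-tame setup of Theorem~\ref{T1} and the subsequent normal-form analysis rest.
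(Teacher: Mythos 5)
Your proof is correct and is essentially the intended argument: the paper states this lemma without proof, delegating it to \cite{GY20061}, and the mechanism there is exactly the one you use --- one factor of $A^{-1/2}$ hidden in $u$ and one exposed in the Fourier coefficients of $G_{\bar q}$ each trade for a derivative via $\lambda_{\mathbf{j}}\gtrsim|\mathbf{j}|_2^2$, while the Banach-algebra property of $H^{p+1}(\mathbb{T}^d)$ (valid since $p+1>d/2$) controls the cubic nonlinearity, and the extension from $u^3$ to general analytic $f$ by summing the Taylor series is handled as you indicate.
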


Next we introduce standard action-angle variables
\begin{equation}
(x,y)=((x_1,\dots,x_n),(y_1,\dots,y_n))
\end{equation}
in the $(q_{j_1},\dots,q_{j_n},\bar q_{j_1},\dots,\bar q_{j_n})$-space by letting
\begin{equation}
q_{\textbf{j}_i}=\sqrt{y_i}e^{\sqrt{-1}x_i},\qquad 1\leq i\leq n,\;\;\textbf{j}_i \in S,
\end{equation}
and
\begin{equation}
q_\textbf{j}=z_\textbf{j},\quad \bar q_\textbf{j}=\bar z_\textbf{j},\qquad \textbf{j} \in Z_1^d.
\end{equation} So system (\ref{031401}) becomes
\begin{eqnarray}\label{031402}
\frac{dy_i}{dt}&=&-P_{\theta_i},\\
\frac{dx_i}{dt}&=&\omega_i+P_{I_i},\qquad i=1,\dots,n,\\
\frac{dz_\textbf{j}}{dt}&=&-\sqrt{-1}(\Omega_\textbf{j}z_\textbf{j}+\varepsilon P_{\bar z_\textbf{j}}),\\
 \frac{d\bar z_\textbf{j}}{dt}&=&\sqrt{-1}(\Omega_\textbf{j}\bar z_\textbf{j}+\varepsilon P_{\bar z_\textbf{j}}), \qquad \textbf{j}\in \mathbb{Z}^d_1,
\label{031403}
\end{eqnarray}
where $P(x,y,z,\bar z)$ is just $G(q,\bar q)$ with the $(q,\bar q)$-variables expressed in terms of the $(x,y,z,\bar z)$-variables. The Hamiltonian associated to (\ref{031402})-(\ref{031403}) (with respect to the symplectic structure $\sum_{i=1}^ndy_i\wedge dx_i+\sqrt{-1} \sum_{\textbf{j}\in\mathbb{Z}^d_1}dz_\textbf{j} \wedge d\bar z_\textbf{j}$) is given by
\begin{equation}
H(x,y,z,\bar z;\xi)=\langle \omega(\xi),y\rangle+\sum_{\textbf{j}\in\mathbb{Z}^d_1}\Omega_{\textbf{j}}(\xi)z_\textbf{j}\bar z_\textbf{j}+P(x,y,z,\bar z;\xi).
\end{equation}
Based on (3.12) in \cite{GY2006}, the relationship (\ref{04051}) is satisfied. Note that $G(q, \bar{q})$ has $p$-tame property, and introducing action-angle variables is a coordinate symplectic transformation, so $P(x,y,z,\bar{z})$ has $p$-tame property.

Finally, we obtain a Hamiltonian $H(x,y,z,\bar z;\xi)$ having the
following form
\begin{equation}H(x,y,z,\bar{z};\xi)=N(x,y,z,\bar z;\xi)+P(x,y,z,\bar
z;\xi),\end{equation} where
\begin{equation}
N(x,y,z,\bar z;\xi)=H_{0}(w,\bar w)=\sum_{i=1}^n
\omega_i(\xi)y_i+\sum_{\textbf{j}\in\mathbb{Z}^d_1}\Omega_\textbf{j}(\xi)z_\textbf{j}\bar z_\textbf{j},
\end{equation}with the tangent frequency \begin{equation}\label{001}
\omega({\xi})=(\omega_i(\xi))_{1\leq i\leq n}, \qquad \omega_i=|\textbf{j}_i|_2^2+\xi_{\textbf{j}_i},
\end{equation}and the normal frequency
\begin{equation}\label{002}
\Omega(\xi)=(\Omega_\textbf{j}(\xi))_{\textbf{j}\in\mathbb{Z}^d_1},\qquad
\Omega_\textbf{j}(\xi)=|\textbf{j}|_2^2+\xi_{\textbf{j}},
\end{equation}

In view of (\ref{001}) and (\ref{002}), Assumption (1) in Theorem \ref{T1} satisfies.

In view of (\ref{00001}), (\ref{00002}) and noting that the coordinate transformation of action-angle variables preserves $p$-tame property, $R=\varepsilon G$ satisfy Assumption (2) in Theorem \ref{T1}.

Moreover, based on (3.12) in \cite{GY2006}, Assumption (3) in Theorem \ref{T1} satisfies.

Hence, all assumptions in Theorem \ref{T1} hold. According to Theorem \ref{T1}, we obtain a KAM normal form of order 2, where the nonlinear terms satisfy $p$-tame property.

Furthermore, we obtain a KAM partial normal form of order $\mathcal{M}+2$ where the nonlinear terms satisfy $p$-tame property based on Theorem \ref{thm7.1}.

Finally, based on Theorem \ref{T3}, for each $\xi\in\tilde\Pi\subset\Pi_{\eta}$, the KAM
torus $\mathcal{T}_{\xi}$ for equation (\ref{26}) is sticky, i.e.
for any solution
$u(t,x)$ of equation (\ref{26}) with the
initial datum satisfying
$${d}_{H^p_0[0,\pi]}(u(0,x),\mathcal{T}_{\xi})\leq \delta,$$
then
\begin{equation*}{d}_{H^p_0[0,\pi]}(u(t,x),\mathcal{T}_{\xi})\leq
2\delta,\qquad \mbox{for all} \ |t|\leq
{\delta}^{-\mathcal{M}}.
\end{equation*}
\end{proof}

\section{The measure of the non-resonant set $\tilde\Pi$}\label{091406}
In this section, we will show that for most $\xi$, the frequencies $\breve\omega(\xi)$
and $\breve\Omega(\xi)$ are $(\tilde\eta,\mathcal{N},\mathcal{M})$-non-resonant. More precisely, we have the following lemma:
\begin{lemma}
The non-resonant set $\tilde\Pi$ defined in (\ref{032501}) satisfies the following estimate
\begin{equation}\label{091301} Meas\ \tilde\Pi \geq(Meas\ \Pi_{\eta})(1-c\tilde\eta),
\end{equation}
where $c>0$ is a constant depending on $n$.
\end{lemma}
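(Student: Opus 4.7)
My plan is to carry out the standard KAM-style measure estimate on $\mathcal{R}=\bigcup_{k,\tilde l,\hat l}\mathcal{R}_{k\tilde l\hat l}$, handling each triple separately and then summing. Since $\tilde\Pi=\Pi_\eta\setminus\mathcal{R}$, the task reduces to proving $\mathrm{Meas}(\mathcal{R})\leq c\tilde\eta\,\mathrm{Meas}(\Pi_\eta)$. The essential input is that (\ref{080103}) and (\ref{080104}) tell us the perturbed frequencies $\breve\omega,\breve\Omega$ differ from the affine unperturbed expressions $|\textbf{j}_i|_2^2+\xi_{\textbf{j}_i}$ and $|\textbf{j}|_2^2+\xi_{\textbf{j}}$ by $O(\eta^8\epsilon)$ and $O(\eta^8\epsilon/|\textbf{j}|_2^2)$ respectively, both in value and in any single Whitney derivative $\partial_{\xi_{\textbf{j}}}$. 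Consequently the small divisor
\begin{equation*}
F_{k\tilde l\hat l}(\xi):=\langle k,\breve\omega(\xi)\rangle+\langle \tilde l,\tilde\Omega(\xi)\rangle+\langle \hat l,\hat\Omega(\xi)\rangle
\end{equation*}
is a nearly-affine function of every $\xi_{\textbf{j}}$.

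For each admissible triple I would pick a coordinate $\xi_{\textbf{j}_*}$ in which $F_{k\tilde l\hat l}$ has a large partial derivative. If $k\neq 0$, choose $i$ with $|k_i|=\max_j|k_j|\geq|k|/n$ and set $\textbf{j}_*=\textbf{j}_i$; the derivative equals $k_i+O(\eta^8\epsilon)(|k|+|\tilde l|+|\hat l|)$, bounded below by $|k|/(2n)$ for $\eta$ small. If $k=0$, choose $\textbf{j}_*$ with $(\tilde l+\hat l)_{\textbf{j}_*}\neq 0$, so the derivative becomes $(\tilde l+\hat l)_{\textbf{j}_*}+O(\eta^8\epsilon)(|\tilde l|+|\hat l|)$, which is $\geq 1/2$ in absolute value. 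A one-dimensional slicing of $\Pi_\eta$ in the direction $\xi_{\textbf{j}_*}$ combined with Fubini then gives
\begin{equation*}
\mathrm{Meas}(\mathcal{R}_{k\tilde l\hat l})\leq c\,\mathrm{Meas}(\Pi_\eta)\,\frac{\tilde\eta}{4^{3\mathcal{M}}(|k|+1)^\tau\mathcal{N}^{3(|\tilde l|+4)^2}}.
\end{equation*}

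Now we sum. Over $k\in\mathbb{Z}^n$ the series $\sum(|k|+1)^{-\tau}$ converges because $\tau>2n+5$. Over $\tilde l$ supported on $\{|\textbf{j}|_2\leq\mathcal{N}\}$ with $|\tilde l|\leq\mathcal{M}+2$, the number of multi-indices of weight $h$ is at most $C\mathcal{N}^{dh}$, and this polynomial growth is easily dominated by the super-quadratic decay $\mathcal{N}^{-3(h+4)^2}$ built into $C(\mathcal{N},\tilde l)^{-1}$. Over $\hat l$ with $|\hat l|\leq 2$, nonemptiness of $\mathcal{R}_{k\tilde l\hat l}$ together with the bounded range of $\xi$ forces the leading integer part of $\langle\hat l,\hat\Omega\rangle$ to have magnitude at most $C(|k|+|\tilde l|\mathcal{N}^2)$; when $|\hat l|\leq 1$, or when $|\hat l|=2$ with aligned signs ($\hat l=\pm 2 e_{\textbf{j}}$ or $\pm(e_{\textbf{j}_1}+e_{\textbf{j}_2})$), this immediately restricts the admissible support to a polynomially-sized set in $\mathbb{Z}^d\setminus B_{\mathcal{N}}$.

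The main obstacle is the mixed-sign subcase $\hat l=\pm(e_{\textbf{j}_1}-e_{\textbf{j}_2})$ with $\textbf{j}_1\neq\textbf{j}_2$ both of size greater than $\mathcal{N}$, because the leading integer $|\textbf{j}_1|_2^2-|\textbf{j}_2|_2^2$ can remain bounded for arbitrarily large $|\textbf{j}_i|_2$; this is exactly the multiplicity obstacle caused by the degeneracy of the Laplacian spectrum in $d\geq 2$ that the introduction identifies as the key difficulty, resolved through the spatial-form condition (\ref{04051}). To handle it I would exploit the momentum-conservation structure inherited by the normal form iteration (so that $\textbf{v}:=\textbf{j}_1-\textbf{j}_2=-\sum_i k_i\textbf{j}_i-\sum_{|\textbf{j}|_2\leq\mathcal{N}}\tilde l_{\textbf{j}}\textbf{j}$ is determined by $(k,\tilde l)$), then perform a second slicing in the independent coordinate $\xi_{\textbf{j}_2}$ (whose derivative is $-1+O(\eta^8\epsilon)$) combined with a lattice counting restriction on $\textbf{j}_2$ via $|2\textbf{v}\cdot\textbf{j}_2+|\textbf{v}|_2^2|\leq C(|k|+|\tilde l|\mathcal{N}^2)$. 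The resulting polynomial cost in $|k|,|\tilde l|,\mathcal{N}$ is absorbed by the strong denominators $(|k|+1)^\tau\mathcal{N}^{3(|\tilde l|+4)^2}$ and the prefactor $4^{-3\mathcal{M}}\tilde\eta$, yielding $\mathrm{Meas}(\mathcal{R})\leq c\tilde\eta\,\mathrm{Meas}(\Pi_\eta)$, i.e., (\ref{091301}).
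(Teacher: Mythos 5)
Your setup and your treatment of the individual sets $\mathcal{R}_{k\tilde l\hat l}$ coincide with the paper's: the twist estimates coming from (\ref{080103})--(\ref{080104}), the choice of the dominant coordinate $\xi_{\textbf{j}_*}$ ($\textbf{j}_i$ with $|k_i|$ maximal when $k\neq0$, a site in the support of $\tilde l$ when $k=0$), and the one-dimensional slicing all appear verbatim in the paper's Cases 1--3, as does the splitting of the $\hat l$-sum into $\mathcal{L}_0,\mathcal{L}_1,\mathcal{L}_{2+},\mathcal{L}_{2-}$ and the observation that only $\mathcal{L}_{2-}$ is problematic.

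The gap is in your resolution of the $\mathcal{L}_{2-}$ case. Fixing $\textbf{v}=\textbf{j}_1-\textbf{j}_2$ by momentum conservation and imposing $|2\textbf{v}\cdot\textbf{j}_2+|\textbf{v}|_2^2|\leq C(|k|+|\tilde l|\mathcal{N}^2)$ does \emph{not} restrict $\textbf{j}_2$ to a polynomially-sized set when $d\geq2$: this is a slab around an affine hyperplane and contains infinitely many lattice points. Each such $\textbf{j}_2$ produces a genuinely different resonant set (the divisor involves the independent parameter combination $\xi_{\textbf{j}_1}-\xi_{\textbf{j}_2}$ plus pair-dependent KAM corrections), so your "second slicing in $\xi_{\textbf{j}_2}$" only bounds each set individually and the union bound over the slab diverges. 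This is precisely the point where the paper does something you are missing: writing $\breve\Omega_{\textbf{i}}(\xi)-\breve\Omega_{\textbf{j}}(\xi)=m+r_{\textbf{i}\textbf{j}}$ with $m=|\textbf{i}|_2^2-|\textbf{j}|_2^2$ and $|r_{\textbf{i}\textbf{j}}|\leq Cm/|\textbf{j}|_2^2$, one gets $\mathcal{R}_{k\tilde l\hat l}\subset\mathcal{Q}_{k\tilde lm\textbf{j}}$ and the nesting $\mathcal{Q}_{k\tilde lm\textbf{j}}\subset\mathcal{Q}_{k\tilde lm\textbf{j}_0}$ for all $|\textbf{j}|_2\geq|\textbf{j}_0|_2$, so the \emph{infinite} union over pairs sharing the same integer $m$ collapses into a single slightly enlarged set plus finitely many exceptional sets with $|\textbf{j}|_2<|\textbf{j}_0|_2=\tilde\eta^{-1/2}4^{\mathcal{M}}(|k|+1)^{\tau/2}C(\mathcal{N},\tilde l)^{1/2}$ (this is Lemma 5 of \cite{Bam2011}, and it is why the paper's bound (\ref{022419}) carries $\tilde\eta^{1/2}$ rather than $\tilde\eta$). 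Without this asymptotic-nesting step, or some substitute for it, your summation over $\mathcal{L}_{2-}$ does not close.
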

\begin{proof}
Firstly, we will show the frequencies $\breve\omega(\xi)$
and $\breve\Omega(\xi)$ are twist about the parameter $\xi$. Precisely,
in view of (\ref{012801}) and (\ref{080103}) we have
\begin{equation}\label{00003}
|\partial_{\xi_{\textbf{j}_i}}\breve\omega_{i}(\xi)|\geq 1-c\eta^8\epsilon,\qquad 1\leq i\leq n,
\end{equation}
and
\begin{equation}\label{00004}
|\partial_{\xi_{\textbf{j}}}\breve\omega_{i}(\xi)|\leq c\eta^8\epsilon,\qquad \textbf{j}\neq \textbf{j}_i.
\end{equation}
Moreover, in view of (\ref{012802}) and (\ref{080104}), we have
\begin{equation}\label{00005}
|\partial_{\xi_{\textbf{j}}}\breve\Omega_{\textbf{j}}(\xi)|\geq 1-\frac{c\eta^8\epsilon}{|\textbf{j}|_2^2},\qquad \textbf{j}\in\mathbb{Z}^d_1,
\end{equation}
and
\begin{equation}\label{00006}
|\partial_{\xi_{\textbf{j}'}}\breve\Omega_{\textbf{j}}(\xi)|\leq \frac{c\eta^8\epsilon}{|\textbf{j}'|_2^2},\qquad \textbf{j}'\neq \textbf{j}, \ \textbf{j}'\in\mathbb{Z}^d,\ \textbf{j}\in\mathbb{Z}_1^d.
\end{equation}

Secondly, we will estimate the measure of the resonant sets
$\mathcal{R}_{k\tilde l\hat l}$.
\\$\textbf{Case 1.}$\\
For $|k|\neq0$, without loss of generality, we assume
\begin{equation}\label{091206}
|k_1|=\max_{1\leq i\leq n}\{|k_1|,\dots,|k_n|\}. \end{equation} Then
\begin{eqnarray*}
&&|\partial_{\xi_{\textbf{j}_1}}(\langle k,\breve\omega(\xi)\rangle+\langle
\tilde
l,\tilde{\Omega}(\xi)\rangle+\langle\hat l,\hat{\Omega}(\xi)\rangle)|\\
&\geq&
|k_1||\partial_{\xi_{\textbf{j}_1}}\breve\omega_1(\xi)|
-\left|\partial_{\xi_{\textbf{j}_1}}\left(\sum_{i=2}^nk_i\breve\omega_i(\xi)+\langle
\tilde l,\tilde{\Omega}(\xi)\rangle+\langle\hat l,\hat{\Omega}(\xi)\rangle\right)\right|\\
&\geq&|k_1|(1-c\eta^8\epsilon)-\left(\sum_{i=2}^n|k_i|+|\tilde l|+|\hat l|\right)c\eta^{8}\epsilon\qquad \\
&&\mbox{(in view of (\ref{00003})- (\ref{00006}))}\\
&\geq& |k_1|-(|k|+\mathcal{M}+2)c\eta^{8}\epsilon\qquad \mbox{(in view of $|\tilde l|+|\hat l|\leq \mathcal{M}+2$)}\\&\geq&
\frac14|k_1|\qquad\qquad\qquad \mbox{(by
(\ref{091206}) and
$\mathcal{M}\leq(2c\eta^{8}\epsilon)^{-1})$}\\
&\geq&\frac14.
\end{eqnarray*}
Hence,
\begin{equation}\label{101102}
Meas\ \mathcal{R}_{k\tilde l\hat
l}\leq\frac{4\tilde\eta}{4^{3\mathcal{M}}(|
k|+1)^{\tau}C(\mathcal{N},\tilde l)}\cdot Meas\ \Pi_{\eta}.
\end{equation}
$\textbf{Case 2.}$ \\
If $|k|=0$ and $|\tilde l|\neq0$, without loss of generality, we
assume \begin{equation*}|\tilde {l}_{\textbf{j}'}|\neq 0
\end{equation*}
and let $$A:=\{\; \textbf{j}\;\; | \;1 \leq |\textbf{j}|_2 \leq \mathcal{N}, \textbf{j} \in Z_1^d\},\;\; A_1:= A\setminus \{\textbf{j}'\}.$$

Then
\begin{eqnarray*}
&&|\partial_{\xi_{\textbf{j}'}}(\langle k,\breve\omega(\xi)\rangle+\langle
\tilde
l,\tilde{\Omega}(\xi)\rangle+\langle\hat l,\hat{\Omega}(\xi)\rangle)|\\
&\geq& |\tilde
l_{\textbf{j}'}||\partial_{\xi_{\textbf{j}'}}\breve{\Omega}_{\textbf{j}'}(\xi)|-|\partial_{\xi_{\textbf{j}'}}(\langle
\tilde l,\tilde{\Omega}(\xi)\rangle+\langle\hat l,\hat{\Omega}(\xi)\rangle-\tilde l_{\textbf{j}'}\breve{\Omega}_{\textbf{j}'}(\xi))|\\
&\geq&|\tilde l_{\textbf{j}'}| \left(1 - \frac{c\eta^8\epsilon}{|\textbf{j}'|^2_2} \right)-\left(\sum_{\textbf{i}\in A_1}|\tilde l_\textbf{i}|+|\hat l|\right)\frac{c\eta^8\epsilon}{|\textbf{j}'|^2_2} \qquad\\
&& \mbox{(by (\ref{00005}) and (\ref{00006}))}\\
&\geq& |\tilde l_{\textbf{j}'}|-\left(|\tilde{l}|+|\hat{l}|\right)\frac{c\eta^8\epsilon}{|\textbf{j}'|^2_2}\qquad\\
&\geq& |\tilde l_{\textbf{j}'}|-\left(\mathcal{M}+2\right)\frac{c\eta^8\epsilon}{|\textbf{j}'|^2_2}\qquad\mbox{(in view of $|\tilde l|+|\hat l|\leq\mathcal{M}+2$)}\\
&\geq&|\tilde l_{\textbf{j}'}|-\frac{3}{4|\textbf{j}'|_2^2}\qquad\qquad\qquad \mbox{(in
view of $\mathcal{M}\leq(2c\eta^{8}\epsilon)^{-1}$ and $|\textbf{j}'|_2\leq \mathcal{N}$)}\\
&\geq&\frac1{4}.
\end{eqnarray*}
Hence,
\begin{equation}\label{101103}
Meas\ \mathcal{R}_{0\tilde l\hat
l}\leq\frac{4\tilde\eta}{4^{3\mathcal{M}}C(\mathcal{N},\tilde l)}\cdot Meas \
\Pi_{\eta}.
\end{equation}
$\textbf{Case 3.}$ \\If $|k|=0,|\tilde l|=0$ and
$1\leq|\hat{l}|\leq2$, then it is easy to see that $ |\langle \hat
l, \hat \Omega(\xi)\rangle|$ is not small, i.e. \begin{equation}\label{101104}\mbox{the sets
$\mathcal{R}_{k\tilde l\hat l}$ are empty for $|k|=0,|\tilde l|=0$
and $1\leq|\hat{l}|\leq2$.}\end{equation}

Now we would like to estimate the measure of $\mathcal{R}$ (see (\ref{0912081})).
Following the notations in \cite{Bam2011}, we define the set
\begin{equation*}
\mathcal{Z}_{n,\mathcal{N}}:=\left\{(k,\tilde l,\hat l)\in\mathbb{Z}^n\times\mathbb{Z}^{\mathcal{N}}\times\mathbb{Z}^{\mathbb{N}}\setminus(0,0,0):|\hat l|\leq 2\right\}
\end{equation*}
and we split
\begin{equation*}
\mathcal{L}:=\left\{\hat l\in\mathbb{Z}^{\mathbb{N}}:|\hat l|\leq 2\right\}
\end{equation*}
as the union of the following four disjoint sets:
\begin{eqnarray*}
\mathcal{L}_0&=&\{\hat l=0\},\\
\mathcal{L}_1&=&\{\hat l=e_\textbf{j}\},\\
\mathcal{L}_{2+}&=&\{\hat l=e_\textbf{i}+e_\textbf{j}\},\\
\mathcal{L}_{2-}&=&\{\hat l=e_\textbf{i}-e_\textbf{j},\textbf{i}\neq \textbf{j}\},
\end{eqnarray*}
where
\begin{equation*}
e_\textbf{j}: \text{the}\;\; \textbf{j}-th \;\;\text{position is}\;\; 1,
\end{equation*}
and $|\textbf{i}|_2,|\textbf{j}|_2\geq n+\mathcal{N}+1$.

Let $|\hat l|=2$ and $\hat{l}=e_i+e_j\in{\mathcal{L}}_{2+}$ for some $|\textbf{i}|_2,|\textbf{j}|_2\geq n+\mathcal{N}+1$.
If $$\min\{|\textbf{i}|_2^2,|\textbf{j}|_2^2\}\geq |k|\cdot||\breve{\omega}({\xi})||+2(\mathcal{M}+2)\mathcal{N}^2+1,$$
 then it is easy to see that
\begin{equation*}
\left|\langle
k,\breve\omega(\xi)\rangle+\langle \tilde
l,\tilde{\Omega}(\xi)\rangle+\langle
\hat{l},\hat{\Omega}(\xi)\rangle\right|\geq 1,
\end{equation*}which is not small. Namely, the resonant sets $\mathcal{R}_{k\tilde l\hat l}$ is empty. So it is sufficient to consider $$\max\{|\textbf{i}|_2^2,|\textbf{j}|_2^2\}< |k|\cdot\parallel\breve{\omega}({\xi})\parallel+2(\mathcal{M}+2)\mathcal{N}^2+1,$$
when the estimate (\ref{022418}) is given below. In fact, we obtain
\begin{eqnarray}
&&\nonumber Meas \bigcup_{(k,\tilde l,\hat l)\in \mathcal{Z}_{n,\mathcal{N}}\bigcap{\mathcal{L}}_{2+}}\mathcal{R}_{k\tilde l\hat l}\\&\leq& \nonumber\sum_{k\neq 0, (k,\tilde l,\hat l)\in \mathcal{Z}_{n,\mathcal{N}}\bigcap{\mathcal{L}}_{2+}} \frac{4\tilde\eta}{4^{3\mathcal{M}}(|
k|+1)^{\tau}C(\mathcal{N},\tilde l)}\cdot Meas\ \Pi_{\eta}\\
&&\nonumber+\sum_{k=0,(k,\tilde l,\hat l)\in \mathcal{Z}_{n,\mathcal{N}}\bigcap{\mathcal{L}}_{2+}} \frac{4\tilde\eta}{4^{3\mathcal{M}}C(\mathcal{N},\tilde l)}\cdot Meas \
\Pi_{\eta}\\&\leq &c_1\tilde \eta \cdot Meas \
\Pi_{\eta},\label{022418}
\end{eqnarray}where $c_1>0$ is a constant depending on $n$ and $\tau.$

Similarly we obtain
\begin{equation}
Meas \bigcup_{(k,\tilde l,\hat l)\in \mathcal{Z}_{n,\mathcal{N}}\bigcap{\mathcal{L}}_{0}}\mathcal{R}_{k\tilde l\hat l}\leq c_2\tilde\eta \cdot Meas \
\Pi_{\eta},
\end{equation}
and
\begin{equation}
Meas \bigcup_{(k,\tilde l,\hat l)\in \mathcal{Z}_{n,\mathcal{N}}\bigcap{\mathcal{L}}_{1}}\mathcal{R}_{k\tilde l\hat l}\leq c_2\tilde\eta \cdot Meas \
\Pi_{\eta},
\end{equation}
where $c_2>0$ is a constant depending on $n$ and $\tau$.
Now let
\begin{equation*}
(k,\tilde l,\hat l)\in \mathcal{Z}_{n,\mathcal{N}}\bigcap{\mathcal{L}}_{2-},
\end{equation*}
and assume $|\textbf{i}|_2>|\textbf{j}|_2$ without loss generality.
In view of (\ref{012802}) and (\ref{080104}),
there is a constant $C>0$ such that
\begin{equation*}
\left|\frac{\breve \Omega_\textbf{i}(\xi)-\breve \Omega_\textbf{j}(\xi)}{|\textbf{i}|_2^2-|\textbf{j}|_2^2}-1\right|\leq \frac C{|\textbf{j}|_2^2}.
\end{equation*}
Hence,
\begin{equation*}
\langle\hat l,\hat\Omega(\xi)\rangle=\breve\Omega_\textbf{i}(\xi)-\breve\Omega_\textbf{j}(\xi)=|\textbf{i}|_2^2-|\textbf{j}|_2^2+r_{\textbf{i}\textbf{j}},
\end{equation*}
with
\begin{equation*}
|r_{\textbf{i}\textbf{j}}|\leq \frac {Cm}{|\textbf{j}|_2^2},
\end{equation*}
and $m=|\textbf{i}|_2^2-|\textbf{j}|_2^2$. Then we have
\begin{equation*}
\left|\langle k,\breve\omega(\xi)\rangle+\langle \tilde l,\tilde\Omega(\xi)\rangle+\langle\hat l,\hat\Omega(\xi)\rangle\right|\geq \left|\langle k,\breve\omega(\xi)\rangle+\langle \tilde l,\tilde\Omega(\xi)\rangle+m\right|-|r_{\textbf{i}\textbf{j}}|.
\end{equation*}
Therefore,
\begin{equation*}
\mathcal{R}_{k\tilde l\hat l}\subset \mathcal{Q}_{k\tilde l m\textbf{j}}:=\left\{\left|\langle k,\breve\omega(\xi)\rangle+\langle \tilde l,\tilde\Omega(\xi)\rangle+m\right|\leq \frac{\tilde\eta}{4^{3\mathcal{M}}(|
k|+1)^{\tau}C(\mathcal{N},\tilde l)}+\frac{Cm}{|\textbf{j}|_2^2}\right\}.
\end{equation*}
For $|\textbf{j}|_2\geq|\textbf{j}_0|_2$, we have
\begin{equation*}
\mathcal{Q}_{k\tilde l m\textbf{j}}\subset \mathcal{Q}_{k\tilde l m\textbf{j}_0}.
\end{equation*}
Then it is sufficient to consider
\begin{equation*}
m\leq  |k|\cdot\parallel\breve{\omega}({\xi})\parallel+2(\mathcal{M}+2)\mathcal{N}^2+1,
\end{equation*}
and let
\begin{equation*}
|\textbf{j}_0|_2=\tilde \eta^{-1/2}4^{\mathcal{M}}(|k|+1)^{\tau/2}C(\mathcal{N},\tilde l)^{1/2}.
\end{equation*}
Then following the proof of Lemma 5 in \cite{Bam2011}, we obtain
\begin{equation}\label{022419}
Meas\ \bigcup_{(k,\tilde l,\hat l)\in \mathcal{Z}_{n,\mathcal{N}}\bigcap{\mathcal{L}}_{2-}}\mathcal{R}_{k\tilde l\hat l}\leq c_3\tilde\eta^{1/2} \cdot Meas\ \Pi_{\eta},
\end{equation}
where $c_3>0$ is a constant depending on $n$ and $\tau$.
Finally, in view of (\ref{022418})-(\ref{022419}) and (\ref{0912081}),
we obtain
\begin{equation}\label{022420}
Meas\ \mathcal{R}
\leq c\tilde\eta^{1/2} \cdot Meas\ \Pi_{\eta},
\end{equation}
where $c$ is a constant depending on $c_1$, $c_2$, $c_3$, $n$ and $\tau$. Then combining (\ref{032501}) with (\ref{022420}), we finish the proof of (\ref{091301}).

\end{proof}

\section{Appendix: Properties of the Hamiltonian with $p$-tame property}
In this section, we will discuss some properties of $p$-tame norm, which are proven in \cite{CLY} (or can be proven by a parallel way).
\begin{lemma}\label{021102}({Estimation of the Poisson brackets})
Suppose that both Hamiltonian functions
$$U(x,y,z;\xi)=\sum_{\beta\in\mathbb{N}^{{\mathbb{Z}_1^d}}}U^{\beta}(x,y;\xi)z^{\beta},$$
and
$$V(x,y,z;\xi)=\sum_{\beta\in\mathbb{N}^{{\mathbb{Z}_1^d}}}V^{\beta}(x,y;\xi)z^{\beta},$$
satisfy $p$-tame property on the domain $D(s,r,r)\times\Pi$,
where
\begin{equation*}
U^{\beta}(x,y;\xi)=\sum_{\alpha\in\mathbb{N}^{{n}}}U^{\alpha\beta}(x;\xi)y^{\alpha}
,\end{equation*} and
\begin{equation*}
V^{\beta}(x,y;\xi)=\sum_{\alpha\in\mathbb{N}^{{n}}}V^{\alpha\beta}(x;\xi)y^{\alpha}
.\end{equation*} Then the Poisson bracket $\{U,V\}(x,y,z;\xi)$ of
$U(x,y,z;\xi)$ and $V(x,y,z;\xi)$ with respect to the symplectic structure $\sum_{i=1}^{n}dy_i\wedge dx_i+\sqrt{-1}\sum_{\textbf{j}\in\mathbb{Z}_1^{d}}dz_{\textbf{j}}\wedge \bar{z}_{\textbf{j}}$ has $p$-tame property on the
domain $D(s-\sigma,r-\sigma',r-\sigma')\times\Pi$ for
$0<\sigma<s,0<\sigma'<r/2$. Moreover, the following inequality holds
\begin{eqnarray}&&|||X_{\{U,V\}}|||_{p,D(s-\sigma,r-\sigma',r-\sigma')\times\Pi}^T
\nonumber\\
\label{091120}&\leq&C\max\left\{\frac 1{\sigma},\frac
{r}{\sigma'}\right\}|||X_{U}|||_{p,D(s,r,r)\times\Pi}^T
|||X_{V}|||_{p,D(s,r,r)\times\Pi}^T,
\end{eqnarray}
where $C>0$ is a constant depending on $n$.
\end{lemma}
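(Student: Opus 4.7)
The proof is a direct calculation of the $p$-tame norm of $X_{\{U,V\}}$ starting from the explicit expansion
\begin{equation*}
\{U,V\} = \sum_{i=1}^n\big(U_{y_i}V_{x_i} - U_{x_i}V_{y_i}\big) + \sqrt{-1}\langle JU_z, V_z\rangle,
\end{equation*}
computing each of the three components $\{U,V\}_x,\{U,V\}_y,\{U,V\}_z$ via the Leibniz rule, and bounding the resulting products by the tame norms of $X_U$ and $X_V$ on the original domain after Cauchy-type losses. The $1/\sigma$ factor in $C\max\{1/\sigma,r/\sigma'\}$ is produced by each extra $x$-derivative through a Cauchy estimate on the Fourier-weighted norm of Definition \ref{112901} (comparing $D(s)$ with $D(s-\sigma)$). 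The $r/\sigma'$ factor is produced by each extra $y$- or $z$-derivative: the $y$-polydisk has radius $r^2$ and the $z$-ball has radius $r$, so shrinking the radii by $\sigma'$ costs $1/\sigma'$ per Cauchy step, and after accounting for the weight $r^h$ hard-wired into \eqref{091405} and \eqref{091403} each such derivative costs $r/\sigma'$.

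The heart of the argument is the $z$-coupling term $\langle JU_z, V_z\rangle$. I would work homogeneous-degree by homogeneous-degree, writing $U=\sum_h U_h$ and $V=\sum_k V_k$ as in Definition \ref{080204}. By \eqref{091402} the multilinear modulus $\lfloor\widetilde{JU_{h,z}}\rceil$ is a bounded map $(\ell^2_{b,p})^{h-1}\to\ell^2_{b,p+2}$, and similarly $\lfloor\widetilde{JV_{k,z}}\rceil:(\ell^2_{b,p})^{k-1}\to\ell^2_{b,p+2}$. To estimate the coupling one pairs the outputs through the natural duality $\ell^2_{b,p+2}\times\ell^2_{b,p}\to\mathbb{C}$ (so that $p>d$ and the embedding $\ell^2_{b,p}\hookrightarrow\ell^2_{b,d}$ supply the algebra structure needed), distributes the $h+k-2$ test vectors $z^{(1)},\dots,z^{(h+k-2)}$ between the two factors, and applies the averaging rule \eqref{091401}. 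The regularity gain $p\to p+2$ emphasized in the remark following Definition \ref{071803} is exactly what makes the pairing bounded; without it one could only close in $\ell^2_{b,p}$ on one side and would lose two derivatives.

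The main obstacle is precisely the combinatorial bookkeeping in this $z$-coupling step: one has to verify that in \eqref{091401} the unique $\ell^2_{b,p}$ test-slot may be placed on either the $U$-side or the $V$-side with the same bound, so that the supremum over test vectors matches the product $|||X_U|||^T\cdot|||X_V|||^T$ cleanly. The same issue recurs in the mixed pieces $U_{y_iz}V_{x_i}$, etc., where one has to make sure that the $d$-tame norm appearing through \eqref{122401} absorbs the extra $z$-derivative produced by Leibniz. Once these distributions are verified (by symmetry and the very definition of the tame norm), summing over homogeneous degrees via Definition \ref{080204}, collecting the Cauchy losses $1/\sigma$ and $r/\sigma'$, and absorbing the sum over $i=1,\dots,n$ and the combinatorial weights into a constant $C=C(n)$ produces exactly the estimate \eqref{091120}.
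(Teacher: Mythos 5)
Your outline follows essentially the same route as the proof this paper relies on: the paper itself supplies no argument for Lemma~\ref{021102}, deferring entirely to \cite{CLY}, and the proof there is exactly the term-by-term Leibniz/Cauchy estimate of the three components of $X_{\{U,V\}}$ that you describe, with the $x$-loss giving $1/\sigma$, the $y$- and $z$-losses giving $r/\sigma'$ after the weights in \eqref{091405} and \eqref{091403} are accounted for, and the $z$-coupling term handled through the multilinear moduli together with the $p\to p+2$ regularity of $\lfloor JW_z\rceil$. The one caveat is that the combinatorial distribution of the single $\ell^2_{b,p}$ test slot between the two factors, which you correctly single out as the heart of the matter, is asserted rather than carried out in your sketch --- but that is no less detail than the paper itself provides.
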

Denote $X_U^t $ by the flow of the Hamiltonian vector field of $U(x,y,z;\xi)$. It follows from Taylor's formula that
\begin{equation}\label{081108}
V\circ X_U^t(x,y,z;\xi)=\sum_{i\geq 0}
\frac{t^{i}}{i!}V^{(i)}(x,y,z;\xi),
\end{equation}
where
\begin{equation*}
V^{(0)}(x,y,z;\xi):=V(x,y,z;\xi),\qquad V^{(i)}(x,y,z;\xi):=\{V^{(i-1)},U\}(x,y,z;\xi).
\end{equation*}
Then based on (\ref{091120}) in Theorem {\ref{021102}} and (\ref{081108}), we have the following theorem, which can be parallel proved following the proof of Theorem 3.3 in \cite{CLY}:
\begin{lemma}\label{081914}({Estimation of the symplectic transformation})
Consider two Hamiltonians $U(x,y,z;\xi)$ and $V(x,y,z;\xi)$
satisfying $p$-tame property on the domain $D(s,r,r)\times\Pi$ for
some $0<s,r\leq1$. Given $0<\sigma<s, 0<\sigma'<r/2$, suppose
\begin{equation*}\label{081112}|||X_U|||_{p,D(s,r,r)\times\Pi}^T\leq \frac1{2B},
\end{equation*}
where
\begin{equation*}\label{090504}B=4Ce\max\left\{\frac{1}{\sigma},\frac{r}{\sigma'}\right\},
\end{equation*}
and $C>0$ is the constant given in (\ref{091120}) in Theorem
\ref{021102}. Then for each $|t|\leq 1$, we have
\begin{equation*}
|||X_{V\circ
X_U^t}|||_{p,D(s-\sigma,r-\sigma',r-\sigma')\times\Pi}^T\leq
2|||X_V|||_{p,D(s,r,r)\times\Pi}^T.
\end{equation*}
\end{lemma}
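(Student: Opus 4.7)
The plan is to exploit the Taylor expansion (\ref{081108}),
$$V \circ X_U^t(x,y,z;\xi) = \sum_{i \geq 0} \frac{t^i}{i!}V^{(i)}(x,y,z;\xi),$$
together with Lemma \ref{021102} applied iteratively, and then sum over $i$ using the smallness hypothesis on $|||X_U|||^T$. The overall strategy is a Cauchy-type iteration where each Poisson bracket consumes a small portion of the analyticity radii $\sigma$ and $\sigma'$.

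The main step is to bound $|||X_{V^{(i)}}|||^T$ on the target domain $D(s-\sigma, r-\sigma', r-\sigma')$ for each $i \geq 1$. I would split the total shrinkage into $i$ equal pieces of sizes $\sigma/i$ and $\sigma'/i$, and for $j = 1,\ldots,i$ apply Lemma \ref{021102} to pass from $D(s-(j-1)\sigma/i,\,r-(j-1)\sigma'/i,\,r-(j-1)\sigma'/i)$ to $D(s-j\sigma/i,\,r-j\sigma'/i,\,r-j\sigma'/i)$. Each such step contributes a factor at most $C\max\{i/\sigma,\,ir/\sigma'\}\cdot |||X_U|||^T$, so after $i$ iterations
$$|||X_{V^{(i)}}|||_{p,D(s-\sigma,r-\sigma',r-\sigma')\times\Pi}^T \leq \bigl(iC\max\{1/\sigma, r/\sigma'\}\cdot |||X_U|||^T\bigr)^i |||X_V|||^T.$$

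Combining this with the Taylor series and the elementary Stirling bound $i^i/i! \leq e^i$, the $i$-th term is dominated by
$$\frac{|t|^i}{i!}|||X_{V^{(i)}}|||^T \leq \bigl(eC\max\{1/\sigma, r/\sigma'\}\cdot |||X_U|||^T\bigr)^i |||X_V|||^T.$$
With the smallness assumption $|||X_U|||^T \leq 1/(2B) = 1/(8eC\max\{1/\sigma,r/\sigma'\})$, the factor in parentheses is at most $1/8$. Exploiting the linearity of the Hamiltonian vector field in the Hamiltonian and the triangle inequality for the $p$-tame norm, I sum:
$$|||X_{V\circ X_U^t}|||^T \leq \sum_{i\geq 0} (1/8)^i |||X_V|||^T = \tfrac{8}{7}|||X_V|||^T \leq 2|||X_V|||^T.$$

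The only genuine technical point will be verifying that the shrinkage scheme is admissible at every step, i.e.\ that $\sigma'/i$ remains strictly less than half of the current $r$-coordinate $r-(j-1)\sigma'/i$; this reduces immediately to the hypothesis $\sigma' < r/2$ and is straightforward. Everything else is routine geometric summation, with the essential analytic content packaged in Lemma \ref{021102}.
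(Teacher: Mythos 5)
Your proposal is correct and follows exactly the route the paper intends: the paper itself only sketches this lemma by invoking the Taylor expansion (\ref{081108}) together with the Poisson bracket estimate of Lemma \ref{021102} (referring to the parallel proof of Theorem 3.3 in \cite{CLY}), which is precisely your iteration with radii split into $i$ equal pieces, the bound $i^i/i!\leq e^i$, and the geometric summation yielding $\sum_{i\geq 0}(1/8)^i=8/7\leq 2$. The one technical point you flag, admissibility of the shrinkage $\sigma'/i$ at each step, indeed reduces to $\sigma'<r/2$ as you say, so nothing is missing.
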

 The following theorem will be used to estimate the $p$-tame norm of the solution of homological equation during KAM iterative procedure and normal form iterative procedure, which can be parallel proved following the proof of Theorem 3.4 in \cite{CLY}:
\begin{lemma}\label{0005}({The $p$-tame property of homological equation})
Consider two Hamiltonians
$$U(x,y,z;\xi)=\sum_{\alpha\in\mathbb{N}^n,\beta\in\mathbb{N}^{{\mathbb{Z}_1^d}}}
U^{\alpha\beta}(x;\xi)y^{\alpha}z^{\beta},$$ and
$$V(x,y,z;\xi)=\sum_{\alpha\in\mathbb{N}^n,\beta\in\mathbb{N}^{{\mathbb{Z}_1^d}}}
V^{\alpha\beta}(x;\xi)y^{\alpha}z^{\beta}.$$ Suppose $V(x,y,z;\xi)$
has $p$-tame property on the domain $D(s,r,r)\times \Pi$, i.e
$$|||X_V|||_{p,D(s,r,r)\times\Pi}^T<\infty.$$ For each
$\alpha\in\mathbb{N}^n,\beta\in\mathbb{N}^{{\mathbb{Z}_1^d}},k\in
\mathbb{Z}^n, \textbf{j}\in\mathbb{Z}^d$ and some fixed constant $\tau>0$, assume the
following inequality holds
\begin{equation*}\label{0007}
|\widehat{U^{\alpha\beta}}(k;\xi)|+|\partial_{\xi_{\textbf{j}}}\widehat{U^{\alpha\beta}}(k;\xi)|\leq
(|k|+1)^{\tau}(|\widehat{V^{\alpha\beta}}(k;\xi)|+|\partial_{\xi_{\textbf{j}}}\widehat{V^{\alpha\beta}}(k;\xi)|)
,\end{equation*} where $\widehat{U^{\alpha\beta}}(k;\xi)$ and
$\widehat{V^{\alpha\beta}}(k;\xi)$ are the $k$-th Fourier
coefficients of $U^{\alpha\beta}(x;\xi)$ and
$V^{\alpha\beta}(x;\xi)$, respectively. Then, $U(x,y,z;\xi)$ has
$p$-tame property on the domain $D(s-\sigma,r,r)\times\Pi$ for
$0<\sigma<s$. Moreover, we have
\begin{equation}\label{0012} |||X_U|||_{p,D(s-\sigma,r,r)\times\Pi}^T\leq
\frac{c}{\sigma^{\tau}} |||X_V|||_{p,D(s,r,r)\times\Pi}^T,
\end{equation}
where $c>0$ is a constant depending on $s$ and $\tau$.
\end{lemma}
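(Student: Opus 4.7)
The strategy is a classical loss-of-analyticity argument. The hypothesis controls the Fourier coefficients of $U^{\alpha\beta}(x;\xi)$ by those of $V^{\alpha\beta}(x;\xi)$ up to a polynomial factor $(|k|+1)^{\tau}$, and this polynomial growth is absorbed when we shrink the analyticity strip from $s$ to $s-\sigma$, producing the claimed factor $c/\sigma^{\tau}$. The key elementary input is the uniform estimate
\begin{equation*}
\sup_{k\in\mathbb{Z}^n}(|k|+1)^{\tau}e^{-|k|\sigma}\leq\frac{c_{\tau}}{\sigma^{\tau}}
\end{equation*}
for a constant $c_{\tau}$ depending only on $\tau$, which is obtained by maximizing the scalar function $t\mapsto t^{\tau}e^{-t\sigma}$.

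Combining this with the coefficient-wise hypothesis and the definition (\ref{092704}) of $\|\cdot\|_{D(s-\sigma)\times\Pi}$, I obtain for every $\alpha$ and $\beta$ the pointwise bound $\|U^{\alpha\beta}\|_{D(s-\sigma)\times\Pi}\leq(c_{\tau}/\sigma^{\tau})\|V^{\alpha\beta}\|_{D(s)\times\Pi}$. From there I propagate the inequality through the tower of definitions building the $p$-tame norm. Because the $y$-norm (\ref{092703}) of Definition \ref{112902} is a positive sum over $\alpha$ of the quantities $\mathcal{W}^{\alpha}\,r^{2|\alpha|}$, and the modulus of Definition \ref{021002} is a positive sum over $\beta$ of the norms $\|W^{\beta}\|_{D(s,r)\times\Pi}$, the coefficient-level inequality passes term by term (with the same factor $c_{\tau}/\sigma^{\tau}$) to a termwise domination of the moduli
\begin{equation*}
\lfloor U\rceil_{D(s-\sigma,r)\times\Pi}(z)\leq\frac{c_{\tau}}{\sigma^{\tau}}\,\lfloor V\rceil_{D(s,r)\times\Pi}(z),
\end{equation*}
and likewise for $\lfloor\widetilde{JU_z}\rceil$, $\lfloor\widetilde{U_x}\rceil$ and $\lfloor\widetilde{U_y}\rceil$.

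With this termwise domination of moduli in hand, I read off the corresponding inequality for each of the operator norms (\ref{091402})--(\ref{091404'}) of Definition \ref{071803}, since every such norm is a supremum of a quotient whose numerator is the value of one of these positive moduli at a tuple $(z^{(1)},\dots)$ while the denominator depends only on the tuple. Summing the three contributions in (\ref{051703}) and then summing the homogeneous parts as in (\ref{102201}) produces (\ref{0012}). The proof is conceptually routine; the only real effort is bookkeeping through the four nested layers (Fourier norm, $y$-operator norm, modulus, vector-field tame norm) and checking that positivity and monotonicity survive each layer. The main obstacle is therefore notational rather than analytical, and the constant $c$ can be taken to depend only on $s$ and $\tau$, with the $s$-dependence entering merely to accommodate the constraint $\sigma<s$.
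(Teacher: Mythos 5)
Your proposal is correct and is essentially the standard argument intended here: the paper itself gives no proof but defers to Theorem 3.4 of \cite{CLY}, whose proof is exactly this loss-of-analyticity estimate $\sup_k(|k|+1)^{\tau}e^{-|k|\sigma}\leq c_{\tau}/\sigma^{\tau}$ applied at the level of the Fourier-coefficient norm (\ref{092704}), followed by the observation that every subsequent layer of Definitions \ref{112902}--\ref{071803} is built from positive sums and suprema of majorant-type quantities, so the termwise domination propagates unchanged to (\ref{0012}). The only point worth making explicit in a write-up is the reduction, in the operator-norm layers, of the supremum over complex tuples $z^{(i)}$ to nonnegative ones (using that the moduli have nonnegative coefficients and that $\parallel(z^{h-1})\parallel_{p,d}$ depends only on $|z^{(i)}|$), which you implicitly use and which is routine.
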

As in \cite{P1}, define \begin{equation}\label{032601}
\parallel w\parallel_{\mathcal{P}^p,D(s,r,r)}=\parallel x\parallel+\frac1{r^2}\parallel y\parallel+\frac1{r}\parallel z\parallel_p,
 \end{equation}for each $w=(x,y,z)\in D(s,r,r)$, and
 define the weighted norm of Hamiltonian vector field $X_U$ on the domain
$D(s,r,r)\times\Pi$ by
\begin{equation}\label{081101}
|||X_U|||_{\mathcal{P}^p,D(s,r,r)\times\Pi}=\sup_{(x,y,z;\xi)\in
D(s,r,r)\times\Pi}\parallel X_U\parallel_{\mathcal{P}^p,D(s,r,r)}.
\end{equation}
Then we have
\begin{lemma}\label{012002}({Compare $p$-tame norm with the usual weighted norm for a Hamiltonian vector field}) Give a Hamiltonian
\begin{equation*}\label{100802}U(x,y,z;\xi)=\sum_{\beta\in\mathbb{N}^{{\mathbb{Z}_1^d}}}
U^{\beta}(x,y;\xi)z^{\beta} \end{equation*} satisfying $p$-tame
property on the domain $D(s,r,r)\times\Pi$ for some $0<s,r\leq 1$.
Then we have
\begin{equation}\label{081105}
|||X_U|||_{\mathcal{P}^p,D(s,r,r)\times\Pi}\leq|||X_U|||_{\mathcal{P}^{p+2},D(s,r,r)\times\Pi}\leq
|||X_U|||_{p,D(s,r,r)\times\Pi}^T.
\end{equation}
\end{lemma}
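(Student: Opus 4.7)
The goal is a two-step inequality comparing uniform weighted norms of $X_U$ with its $p$-tame norm. The strategy is to decompose $U = \sum_{h \geq 0} U_h$ into $z$-homogeneous pieces and apply the pointwise estimates (\ref{011602})--(\ref{012202}) recorded immediately after Definition \ref{071803}, using the fact that on $D(s,r,r)$ one has $\|z\|_d \leq \|z\|_p < r$ (the first inequality holds because $p > d$ makes the weights $|j|_2^{2p}$ dominate $|j|_2^{2d}$ term by term).

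The first inequality $|||X_U|||_{\mathcal{P}^p, D(s,r,r)\times\Pi} \leq |||X_U|||_{\mathcal{P}^{p+2}, D(s,r,r)\times\Pi}$ is immediate from the definition (\ref{032601}): the $x$- and $y$-components of $X_U$ contribute identically in both norms, while the $z$-component enters through $\|JU_z\|_p \leq \|JU_z\|_{p+2}$, which again holds term-by-term by monotonicity of $|j|_2^{2p}$ in $p$.

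For the main inequality $|||X_U|||_{\mathcal{P}^{p+2}, D(s,r,r)\times\Pi} \leq |||X_U|||^T_{p, D(s,r,r)\times\Pi}$, I would fix an arbitrary point $(x,y,z;\xi) \in D(s,r,r) \times \Pi$ and bound the three components of $X_{U_h}$ separately, then sum over $h$ using Definition \ref{080204} and (\ref{102201}). For $v = x$ or $y$, the estimate (\ref{012202}) combined with $\|z\|_d < r$ gives
$$\|(U_h)_v(x,y,z;\xi)\| \leq |||(U_h)_v|||_{D(s,r)\times\Pi}\,\|z\|_d^h \leq |||(U_h)_v|||_{D(s,r,r)\times\Pi}$$
via (\ref{091405}). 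For $h \geq 2$, (\ref{011602}) yields
$$\tfrac{1}{r}\|(U_h)_z(x,y,z;\xi)\|_{p+2} \leq \tfrac{1}{r}|||(U_h)_z|||^T_{p,D(s,r)\times\Pi}\,\|z\|_p\,\|z\|_d^{h-2} \leq \tfrac{1}{r}|||(U_h)_z|||^T_{p,D(s,r,r)\times\Pi},$$
after invoking (\ref{091403}). The degenerate ranges $h = 0$ (where $(U_0)_z = 0$ contributes nothing) and $h = 1$ (where the coefficient sequence $(U_1)_z$ does not depend on $z$, so (\ref{091402'}) applies directly) must be separated out but handled trivially. Dividing by $r^2$ and $r$ where appropriate and assembling the three pieces matches each term in (\ref{032601}) against the corresponding summand in the $p$-tame norm of $X_{U_h}$.

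The only non-routine point is the bookkeeping of powers of $r$: the pointwise estimate (\ref{011602}) carries a factor $\|z\|_p\|z\|_d^{h-2}$, which on $D(s,r,r)$ is bounded by $r^{h-1}$, and one must verify that this factor absorbs cleanly into the $r^{h-1}$ appearing in the definition (\ref{091403}) of the $p$-tame operator norm on the extended domain. Once this matching is checked for each $h$ (with careful separation of the low-degree cases $h = 0, 1$), summation over $h$ yields the desired bound and completes the proof.
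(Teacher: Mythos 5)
Your argument is correct, and it is exactly the proof the paper intends: the paper itself gives no proof of this lemma but only defers to Theorem 3.5 of \cite{CLY}, and your route --- decomposing into $z$-homogeneous pieces, invoking the pointwise bounds (\ref{011602})--(\ref{012202}) together with $\|z\|_d\leq\|z\|_p<r$ on $D(s,r,r)$, matching the powers of $r$ against (\ref{091405}) and (\ref{091403}), and summing over $h$ via (\ref{102201}) --- is precisely the standard argument those remark estimates are set up to deliver, including the correct separate treatment of $h=0,1$.
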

This theorem can be parallel proved following the proof of the theorem  3.5 in \cite{CLY}. Based on Lemma A.4. In \cite{P1} and Theorem (\ref{012002}), we have the following estimate:
\begin{lemma}\label{081503}
Suppose the Hamiltonian
$$U(x,y,z;\xi)=\sum_{\beta\in\mathbb{N}^{{\mathbb{Z}_1^d}}}U^{\beta}(x,y;\xi)z^{\beta}$$
has $p$-tame property on the domain $D(s,r,r)\times\Pi$ for some
$0<s,r\leq 1$. Let $X_U^{t}$ be the phase flow generalized by the
Hamiltonian vector field $X_U$. Given $0<\sigma<s$ and
$0<\sigma'<r/2$, assume
\begin{equation*}\label{091810}|||X_U|||_{p,D(s,r,r)\times\Pi}^T<\min\{\sigma,\sigma'\}.
\end{equation*}
Then, for each $\xi\in\Pi$ and each $|t|\leq 1$, one has
\begin{equation}\label{091101}
\parallel X_U^t-id\parallel_{p,D(s-\sigma,r-\sigma',r-\sigma')}\leq
|||X_U|||_{p,D(s,r,r)\times\Pi}^T.
\end{equation}
\end{lemma}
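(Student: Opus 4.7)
The plan is to combine the integral representation of the Hamiltonian flow with the comparison between the $p$-tame norm and the usual weighted operator norm supplied by Lemma \ref{012002}. For $w \in D(s-\sigma, r-\sigma', r-\sigma')$ I would write
\begin{equation*}
X_U^t(w) - w = \int_0^t X_U(X_U^\tau(w))\, d\tau,
\end{equation*}
so that the desired bound will follow by the triangle inequality and a trivial estimate of the integrand, \emph{provided} the trajectory $\{X_U^\tau(w) : |\tau| \leq 1\}$ stays inside the larger domain $D(s, r, r)$, where the vector field $X_U$ is controlled.

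The first step is therefore a short continuity (bootstrap) argument establishing this containment. By Lemma \ref{012002}, the weighted sup-norm of the vector field satisfies $|||X_U|||_{\mathcal{P}^p, D(s,r,r) \times \Pi} \leq |||X_U|||^T_{p, D(s,r,r)\times\Pi} < \min\{\sigma, \sigma'\}$. Unpacking the definition (\ref{032601}) component by component yields $\parallel(X_U)_x\parallel < \sigma$, $\parallel(X_U)_y\parallel < r^2\sigma'$, and $\parallel(X_U)_z\parallel_p < r\sigma' \leq \sigma'$ (using $r \leq 1$). Let $T^\ast \in [0,1]$ be the supremum of times such that $X_U^\tau(w) \in D(s,r,r)$ for every $0 \leq \tau \leq T^\ast$. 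Integrating each component along $[0, T^\ast]$: the $x$-component moves by less than $\sigma$ in norm, so $\parallel\mbox{Im}\ x\parallel < s$ is preserved; the $z$-component moves by less than $\sigma'$ in the $\ell^2_p$-norm, so $\parallel z\parallel_p < r$ is preserved; and the $y$-component moves by less than $r^2\sigma'$, which is bounded by $\sigma'(2r-\sigma') = r^2 - (r-\sigma')^2$ using $\sigma' < r/2$ and $r \leq 1$. Strict inclusion in $D(s,r,r)$ is thus maintained, and by continuity $T^\ast = 1$; the same argument handles $t \in [-1, 0]$.

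Once the flow is known to remain in $D(s, r, r)$, the integral representation together with Lemma \ref{012002} gives
\begin{equation*}
\parallel X_U^t(w) - w\parallel_{\mathcal{P}^p, D(s,r,r)} \leq |t|\cdot|||X_U|||_{\mathcal{P}^p, D(s,r,r) \times \Pi} \leq |t|\cdot|||X_U|||^T_{p, D(s,r,r)\times\Pi},
\end{equation*}
and since $|t| \leq 1$, taking the supremum over $w \in D(s-\sigma, r-\sigma', r-\sigma')$ delivers (\ref{091101}) in the sense of the norm recalled in (\ref{081102}).

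The only mildly subtle point, and essentially the whole content of the lemma, is that the single smallness hypothesis $|||X_U|||^T_{p, D(s,r,r)\times\Pi} < \min\{\sigma, \sigma'\}$ has to propagate the flow simultaneously along three variables living at different scales. The specific conventions $0 < s, r \leq 1$ and $\sigma' < r/2$, together with the weighting factors $1/r^2$ and $1/r$ in (\ref{032601}), are precisely what make the componentwise bounds $r^2\sigma' < r^2 - (r-\sigma')^2$ and $r\sigma' \leq \sigma'$ go through without any loss; the rest is a Gr\"onwall-free integration.
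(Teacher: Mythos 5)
Your argument is correct and coincides with what the paper does: the paper gives no written proof but simply invokes Lemma A.4 of P\"oschel together with Lemma \ref{012002}, and your bootstrap-plus-integration argument (pass from the $p$-tame norm to the weighted sup-norm $|||X_U|||_{\mathcal{P}^p,D(s,r,r)\times\Pi}$ via Lemma \ref{012002}, check componentwise that the flow cannot leave $D(s,r,r)$ for $|t|\leq 1$, then integrate the vector field along the trajectory) is exactly the content of that cited lemma. The componentwise verifications, in particular $r^2\sigma'<\sigma'(2r-\sigma')$ under $r\leq 1$ and $\sigma'<r/2$, are the right details and they check out.
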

\section*{Acknowledgements}{\it In the Autumn of 2007,
Professor H. Eliasson gave a series of lectures on KAM theory for
Hamiltonian PDEs in Fudan University. In those lectures, he proposed
to study the normal form in the neighbourhood of the invariant tori
and the nonlinear stability of the invariant tori.  The
authors are heartily grateful to Professor Eliasson.

The authors are also heartily grateful to Professor Bambusi and Professor Yuan for valuable discussions and suggestions. }

\bibliographystyle{amsplain}

\end{document}